\newtheorem{theorem}{Theorem}[section]
\newtheorem{proposition}[theorem]%
{Proposition}
\newtheorem{lemma}[theorem]%
{Lemma}
\newtheorem{corollary}[theorem]%
{Corollary}
\newtheorem{example}{Example}[section]  
\numberwithin{equation}{theorem}
\begin{document}

\title{Proximity and Radius in Outerplanar Graphs with Bounded Faces}

\author[1]{Peter Dankelmann\thanks{Financial support by the South African National Research Foundation is greatly acknowledged.}}
\author[1,2]{Sonwabile Mafunda}
\author[1]{Sufiyan Mallu\thanks{The results of this paper form part of third author's PhD thesis.}
\thanks{Financial support by the South African National Research Foundation is greatly acknowledged.}}
\affil[1]{University of Johannesburg\\
South Africa}
\affil[2]{Soka University of America\\
USA}

\maketitle

\begin{abstract}
Let $G$ be a finite, connected graph and $v$ a vertex of $G$. 
The average distance and the eccentricity of $v$ in $G$ are defined as the arithmetic 
mean and the maximum, respectively, of the distances from $v$ to all other vertices of $G$. 
The proximity of $G$ and the radius of $G$ are defined as the minimum of the average 
distances and the eccentricities over all vertices of $G$. 

In this paper, we establish an upper bound on the proximity of a $2$-connected outerplanar graphs 
in terms of order and maximum face length. This bound is sharp apart from a small additive constant.  

It is known that the radius of a maximal outerplanar graph is at most $\lfloor \frac{n}{4} \rfloor +1$.  In the second part of this paper we show that this bound on the radius holds for a much 
larger subclass of outerplanar graphs, for all  
$2$-connected outerplanar graphs of order $n$ whose maximum face length does not exceed 
$\frac{n+2}{4}$.
\end{abstract}
Keywords: Radius; remoteness; proximity; minimum status; planar graph; outerplanar graph\\[5mm]
MSC-class: 05C12

\section{Introduction}
Let $G$ be a finite, connected graph of order $n \geq 2$ with vertex set $V(G)$. 
The \textit{average distance} $\overline{\sigma}(v, G)$ and the \textit{eccentricity} 
${\rm ecc}(v,G)$ of a vertex $v\in V(G)$ are defined as the arithmetic mean and the maximum, 
respectively,  of the distances from $v$ to all other vertices of $G$, i.e.
$\overline{\sigma}(v, G)=\frac{1}{n-1} \sum_{u \in V(G)} d(v,u)$,
${\rm ecc}(v, G)=\max_{u \in V(G)} d(v,u)$,
where $d(v,u)$ denotes the usual shortest path distance between vertices $v$ and $u$.  
The {\em proximity} $\pi(G)$ of $G$ is defined as $\min_{v\in V(G)} \overline{\sigma}(v,G)$.  
The \textit{radius} ${\rm rad}(G)$ of  $G$ is $\min_{v\in V(G)} {\rm ecc}(v,G)$.\\
We note that the proximity is closely related to the minimum status of a graph, which is defined
as $(n-1)\pi(G)$.

A bound on proximity and remoteness (defined as the maximum of the average distances of the 
vertices, denoted by $\rho(G)$), in terms of order was determined by Zelinka \cite{Zel1968} and later, independently, by Aouchiche and Hansen \cite{AouHan2011}, who
introduced the names proximity and remoteness.  

\begin{theorem}
{\rm (Zelinka \cite{Zel1968}, Aouchiche, Hansen \cite{AouHan2011})} \\
\label{theo:bounds-on-proximity-given-order}
Let $G$ be a connected graph of order $n\geq 2$. Then
\[ 1\leq\pi(G) \leq  \left\{ \begin{array}{cc}
\frac{n+1}{4} & \textrm{if $n$ is odd,} \\
\frac{n+1}{4} + \frac{1}{4(n-1)} & \textrm{if $n$ is even.} 
\end{array} \right. \]
The lower bound holds with equality if and only if $G$ has a vertex of degree $n-1$. The upper bound holds with equality if and only if $G$ is a path or a cycle. Also,
\[ 1\leq\rho(G) \leq \frac{n}{2}. \]
The lower bound holds with equality if and only if $G$ is a complete graph. The upper bound holds with equality if and only if $G$ is a path.
\end{theorem}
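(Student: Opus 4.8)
The plan is to treat the four inequalities separately, starting with the two lower bounds and the remoteness upper bound, and reserving the proximity upper bound, which is the genuinely hard part, for last. Throughout I write $s(v)=\sum_{u}d(v,u)=(n-1)\,\overline{\sigma}(v,G)$ for the \emph{transmission} of $v$, so that $\pi(G)=\tfrac{1}{n-1}\min_v s(v)$ and $\rho(G)=\tfrac{1}{n-1}\max_v s(v)$.

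Both lower bounds are immediate. Since $d(v,u)\ge 1$ for every $u\ne v$, each average distance satisfies $\overline{\sigma}(v,G)\ge 1$. Taking the minimum gives $\pi(G)\ge 1$, with equality exactly when some $v$ has $d(v,u)=1$ for all $u$, i.e. $\deg v=n-1$. Taking the maximum gives $\rho(G)\ge 1$; here equality forces $\overline{\sigma}(v,G)=1$ for \emph{every} vertex (a maximum of numbers each $\ge 1$ equals $1$ only if all do), which holds precisely when every vertex is adjacent to all others, i.e. $G=K_n$. For the remoteness upper bound I would fix $v$, set $e={\rm ecc}(v,G)$, and group the remaining vertices into the nonempty BFS layers $L_1,\dots,L_e$. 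Then $s(v)=\sum_{i=1}^{e} i\,|L_i|$, and maximising this linear form subject to $\sum_i|L_i|=n-1$ and $|L_i|\ge 1$ pushes all surplus vertices into the farthest layer, giving $s(v)\le\tfrac12 e(2n-1-e)$. This is largest at $e=n-1$, where it equals $\binom{n}{2}$, so $\overline{\sigma}(v,G)\le n/2$ for every $v$ and hence $\rho(G)\le n/2$. Equality needs $e=n-1$, forcing each layer to be a singleton; then all edges run between consecutive layers and each such edge is forced, so $G$ is exactly the path $P_n$.

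The proximity upper bound is the real work. Since $G$ contains a spanning tree $T$ and distances only shrink when edges are added, $s_G(v)\le s_T(v)$ for all $v$, whence $\pi(G)\le\pi(T)$; this reduces the inequality to the claim that among all trees of order $n$ the path maximises proximity. For trees I would locate the minimiser $v$ of $s$ through the centroid condition: moving from $v$ to a neighbour $w$ changes $s$ by the number of vertices closer to $v$ minus the number closer to $w$, and in a tree deleting the edge $vw$ splits $V(G)$ into the part $A$ containing $v$ and the part $B$ containing $w$, so the change equals $|A|-|B|$; minimality forces $|A|\ge|B|$. An exchange argument then shows any non-path tree can be altered to strictly increase its minimum transmission (detach a branch at a vertex of degree $\ge 3$ and append it to an end of a longest branch), so the path is the unique tree maximiser. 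Evaluating $s$ at the central vertex of $P_n$ and at any vertex of $C_n$ supplies the exact constants: for odd $n$ both give $s=\tfrac{n^2-1}{4}$, so $\pi=\tfrac{n+1}{4}$, while for even $n$ both give $s=\tfrac{n^2}{4}$, producing $\tfrac{n^2}{4(n-1)}=\tfrac{n+1}{4}+\tfrac{1}{4(n-1)}$. The parity split is precisely the gap between $\tfrac{n^2-1}{4}$ and $\tfrac{n^2}{4}$ being the smallest attainable integer transmission.

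The hardest part is the equality characterisation, for which the spanning-tree reduction is only the first step. One must show that $\pi(T)$ attains the path value only for $T=P_n$, and then argue that a graph $G\supseteq P_n$ preserves $\pi(G)=\pi(P_n)$ exactly when the added chords leave the minimum transmission unchanged. A single small example ($P_5$ plus the chord joining its ends gives $C_5$ and keeps $s=6$, whereas any other chord strictly lowers some transmission) indicates that the only surviving case is the chord closing $P_n$ into $C_n$, so that $G$ must be a path or a cycle. Controlling this last step in general, together with the integer bookkeeping behind the even-$n$ correction term, is where I expect the main difficulty to lie.
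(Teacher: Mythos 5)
The paper does not prove this theorem at all — it is quoted from Zelinka and Aouchiche--Hansen — so your attempt can only be judged on its own terms. The routine parts are correct: both lower bounds with their equality conditions, the BFS-layer bound $s(v)\le \tfrac12 e(2n-1-e)\le\binom{n}{2}$ for remoteness together with its equality analysis, the spanning-tree reduction $\pi(G)\le\pi(T)$, and the arithmetic identifying $\tfrac{n^2}{4(n-1)}=\tfrac{n+1}{4}+\tfrac{1}{4(n-1)}$.

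The first genuine gap is that your exchange argument for the tree lemma is false as stated. Take the double star on $n=10$ vertices: adjacent hubs $A,B$, each with four leaves. Its minimum transmission is $13$, attained at either hub. Vertex $A$ has degree $5\ge 3$; its longest branch is the one containing $B$, whose deepest leaves are the leaves of $B$. Detaching a leaf of $A$ and appending it to such a leaf yields a tree in which $B$ still has transmission $1+3\cdot 2+4\cdot 1+2=13$, so the minimum transmission does \emph{not} strictly increase under one exchange. Hence strict monotonicity fails and uniqueness of the path as maximiser cannot be extracted this way. The lemma itself is true, and your own centroid observation already supplies a correct proof: a median vertex $v$ of a tree has every branch of size at most $n/2$ (your $|A|\ge|B|$ condition gives $n-2|B|\ge 0$); the distance sum from $v$ into a branch on $b$ vertices is at most $1+2+\cdots+b=\binom{b+1}{2}$; and since $\sum_i\binom{b_i+1}{2}$ is convex, it is maximised subject to $\sum_i b_i=n-1$, $b_i\le\lfloor n/2\rfloor$ by two nearly balanced path-branches, giving exactly $\tfrac{n^2-1}{4}$ for odd $n$ and $\tfrac{n^2}{4}$ for even $n$, with equality forcing $T\cong P_n$.

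The second gap is the equality characterisation ``path or cycle,'' which you do not prove: one example on $P_5$ plus the word ``indicates'' is not an argument, as you yourself concede. Here again your reduction contains the fix. If $\pi(G)$ attains the bound, then for \emph{every} spanning tree $T$ of $G$ the chain $\pi(G)\le\pi(T)\le\pi(P_n)$ collapses to equalities, so by the uniqueness statement of the tree lemma every spanning tree of $G$ is a Hamiltonian path. But if $G$ had a vertex $v$ of degree at least $3$, a BFS tree rooted at $v$ would keep all neighbours of $v$ as children of $v$, producing a spanning tree that is not a path. Hence $G$ has maximum degree at most $2$, i.e.\ $G$ is a path or a cycle, and both attain the bound by your computations. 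With these two repairs the proposal becomes a complete proof.
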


The following bound on radius is folklore. 

\begin{proposition} \label{prop:bound-on-radius-given-order}
Let $G$ be a connected graph of order $n$. Then 
\[ {\rm rad}(G) \leq \frac{n}{2}. \]
\end{proposition}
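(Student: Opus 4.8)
The plan is to reduce the statement to the case of trees and then analyse a longest path.

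\textbf{Step 1 (reduction to a spanning tree).} First I would fix a spanning tree $T$ of $G$. Since deleting edges cannot decrease distances, $d_G(x,y)\le d_T(x,y)$ for all $x,y\in V(G)$; hence ${\rm ecc}(v,G)\le {\rm ecc}(v,T)$ for every vertex $v$, and taking the minimum over $v$ gives ${\rm rad}(G)\le {\rm rad}(T)$. It therefore suffices to prove ${\rm rad}(T)\le n/2$ for every tree $T$ of order $n$.

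\textbf{Step 2 (the candidate centre).} In $T$ I would choose a path $P\colon v_0v_1\cdots v_k$ of maximum length $k$ and take the middle vertex $c=v_{\lfloor k/2\rfloor}$. Because the $k+1$ vertices of $P$ are distinct, $k\le n-1$.

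\textbf{Step 3 (bounding the eccentricity of $c$).} The crux is to show ${\rm ecc}(c,T)\le \lceil k/2\rceil$. For an arbitrary vertex $z$, let $v_j$ be the vertex of $P$ at which the unique $z$--$P$ path meets $P$; then $d_T(c,z)=|j-\lfloor k/2\rfloor|+d_T(v_j,z)$. Since $P$ is longest, appending the $v_j$--$z$ branch to each of the two halves of $P$ yields $j+d_T(v_j,z)\le k$ and $(k-j)+d_T(v_j,z)\le k$, so $d_T(v_j,z)\le\min\{j,\,k-j\}$. Substituting and distinguishing the cases $j\le\lfloor k/2\rfloor$ and $j>\lfloor k/2\rfloor$ gives $d_T(c,z)\le\lceil k/2\rceil$ in both. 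Hence ${\rm rad}(T)\le {\rm ecc}(c,T)\le\lceil k/2\rceil\le\lceil (n-1)/2\rceil=\lfloor n/2\rfloor\le n/2$, which together with Step 1 completes the proof.

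\textbf{Main obstacle.} I expect Step 3 to carry all the content. It is worth noting that one cannot argue directly in $G$: the middle vertex of a \emph{diametral} path of $G$ need not have eccentricity $\lceil{\rm diam}(G)/2\rceil$, since some vertex may be far from it without lying ``past'' either endpoint. The reduction to a tree in Step 1 is precisely what makes the projection argument of Step 3 valid, because in a tree every other vertex branches off $P$ at a single well-defined point $v_j$.
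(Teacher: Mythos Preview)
Your argument is correct. The reduction to a spanning tree is clean, and the analysis in Step~3 is right: the key inequality $d_T(v_j,z)\le\min\{j,k-j\}$ follows because $P$ is a longest path, and the two cases then give $d_T(c,z)\le\lceil k/2\rceil\le\lfloor n/2\rfloor$.

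There is nothing to compare against, however: the paper does not prove this proposition. It is introduced with the sentence ``The following bound on radius is folklore'' and stated without proof. Your write-up is a perfectly good justification of this folklore fact; the longest-path/centre argument you use is indeed one of the standard ways to establish it.
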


Since by Theorem \ref{theo:bounds-on-proximity-given-order} the graphs that maximise the
proximity among graphs of given order have only vertices of small degree, it is natural
to expect improved bounds if also vertex degrees are taken into account. 
It was shown by Dankelmann \cite{Dan2015} that for graphs of minimum degree $\delta \geq 3$, the bound 
in Theorem \ref{theo:bounds-on-proximity-given-order} can be improved by a factor of 
about $\frac{3}{\delta+1}$. For further results on proximity involving minimum
degree see \cite{Dan2016, DanJonMaf2021, DanMaf2022}. 
Rissner and Burkhard \cite{RisBur2014} determined trees that minimise and maximise the proximity 
among all trees of given order and maximum degree. For a 
bound that takes into account both, maximum and minimum degree, see \cite{DanMafMal2022}. 
Bounds on the proximity of trees in terms of order and among others, number of vertices of odd degree 
were given by Peng and Zhou \cite{PenZho2021}.  
Guo and Zhou \cite{GuoZho2023} determined the trees that minimise the proximity
among all trees with a given degree sequence. 

Several relations between proximity and other distance based graph parameters 
were proved by Aouchiche and Hansen \cite{AouHan2011}. Hua and Das \cite{HuaDas2014} 
resolved conjectures relating proximity or remoteness to average distance of the graph
(defined as the arithmetic mean of  the distances between all pairs of vertices), maximum degree and average degree. They also found relations between proximity, remoteness and clique number 
(defined as the order of a largest complete subgraph).   
The maximum value of the difference between 
average eccentricity (defined as the arithmetic mean of the eccentricities of the vertices
of $G$) and proximity was determined by Ma, Wu and Zhang \cite{MaWuZha2012}.
Pei et al. \cite{PeiPanWanTia2021} determined the maximum difference between domination number (defined as the 
minimum cardinality of a set of vertices of $G$ so that every vertex not in the set is adjacent 
to some vertex in the set) and proximity.
The differences between average distance and proximity and between average eccentricity and 
remoteness were considered by Sedlar \cite{Sed2013}.
For relations between proximity, remoteness and girth (defined as the length of a shortest cycle) see  \cite{AouHan2017}.

Also proximity in various graph classes has been explored. 
Barefoot, Entringer and Szek\'{e}ly \cite{BarEntSze1997} obtained results on proximity and 
remoteness in trees. 
Several results for trees with no vertex of degree $2$ were obtained by 
Chen, Lin and Zhou \cite{CheLinZho2021}. 
Best possible bounds on proximity and remoteness in maximal planar graphs and related graphs 
were proved by Czabarka et al.\ in \cite{CzaDanOlsSze2021, CzaDanOlsSze2022},
see also \cite{DanMafMal-manu}. 
Recently, the bound in Theorem \ref{theo:bounds-on-proximity-given-order} was extended to digraphs by 
Ai, Gerke, Gutin and Mafunda \cite{AiGerGutMaf2021}.
Further results can be found in the survey on proximity and remoteness in graphs in \cite{AouRat2024}.

The radius of graphs has been studied extensively in the literature. The bound in 
Proposition \ref{prop:bound-on-radius-given-order} has been improved by a factor of about
$\frac{3}{\delta+1}$ for graphs of minimum degree $\delta$ in \cite{ErdPacPolTuz1989} 
(see also \cite{KimRhoSonHwa2012, Muk2014}). We only mention here the following bounds on the radius
pertinent to the present paper. It is well-known that the diameter
of a connected graph cannot exceed twice the radius. It was shown in \cite{ChaNem1984} that for 
chordal graphs, i.e., graphs with no induced cycle of length greater than $3$, the diameter is
always close to twice the radius. Bounds on the radius of maximal planar graphs and planar 
graphs with bounded face length were given in \cite{AliDanMuk2012}.

The starting point for this paper is the observation that the bounds on proximity in 
Theorem \ref{theo:bounds-on-proximity-given-order} and radius in 
Proposition \ref{prop:bound-on-radius-given-order} can be improved significantly 
(to about $\frac{n}{12}$ for proximity \cite{CzaDanOlsSze2022} and to about 
$\frac{n}{6}$ for the radius  \cite{AliDanMuk2012}) for maximal planar graphs. 
The latter bound was generalised to planar graphs with given maximum face length. 
The goal of this paper is to prove corresponding bounds on proxmitiy and radius for 
outerplanar graphs with bounded face lengths. We present an upper bound on 
proximity for $2$-connected outerplanar
graphs in terms of order and maximum face length, which is sharp apart from a small additive 
constant. 
It is fairly straightforward to prove that the radius of a maximal outerplanar graph of order
$n$ cannot exceed $\frac{n}{4}+1$. We show that the condition that the graph is maximal outerplanar,
i.e., that every face has length $3$, can be relaxed considerably by  proving that the same
conclusion holds for $2$-connected outerplanar graphs in which no face has length greater
than $\frac{n+2}{4}$.

This paper is organised as follows. 
In Section \ref{section:terminolory} we introduce the terminology and notation used in
this paper. 
In Section \ref{section:proximity-in-2-connected-OP}, using a novel approach, we derive an upper 
bound on the proximity of all 2-connected outerplanar graphs of given order and maximum face 
length, and we show that the bound is sharp apart from a small additive constant. 
Finally, in Section \ref{section:radius}, we present an upper bound on the radius of $2$-connected outerplanar graphs with bounded face lengths, and we show that this bound is sharp.

\section{Terminology and Notation}
\label{section:terminolory}

We use the following notation. 
Let $G=(V(G),E(G))$ be a finite, simple, connected graph, by $n(G)$ we 
denote the order of $G$, i.e., the number of vertices of $G$. 

The \emph{distance} $d_G(v,u)$ from vertex  $v$ to vertex $u$ in $G$ is the length of a shortest 
$(v,u)$-path in $G$. 
The \emph{transmission} of a vertex  $v\in V(G)$ is defined as 
$\sigma(v, G)=\sum_{u\in V(G)}d(u,v)$, so $\sigma(v,G) = (n(G)-1) \overline{\sigma}(v,G)$. 
If there is no danger of confusion, then we often drop the argument $G$. 
A vertex whose average distance equals $\pi(G)$ is called a \emph{median vertex} of $G$. 

The \emph{neighbourhood} of a vertex $v$ of $G$, denoted by $N(v)$, is the set of all vertices adjacent to $v$, and the cardinality $|N(v)|$ is the \emph{degree} of $v$, which we
denote by ${\rm deg}(v)$. 
A connected graph $G$ is \emph{$2$-connected} if deleting a vertex from $G$ leaves $G$ connected.

We say that a graph $G$ is \emph{outerplanar} if it can be embedded in the plane without any edge crossings such that every vertex of $G$ lies on the boundary of the exterior region.
The \emph{weak dual} of a such embedded outerplanar graph $G$, denoted by $G^{w}$, is the graph whose vertices correspond to the interior faces of $G$, with an edge between two vertices if and only if the corresponding faces in $G$ share a common edge. An outerplanar graph is said to be \emph{maximal outerplanar} if the addition of any new edge would render the graph no longer outerplanar. \\
In this paper we consider only maximal outerplanar graphs and $2$-connected outerplanar graphs.
Both are known to have a unique embedding in the plane. Hence we usually refer to the weak dual of 
the graph without specifying the embedding.  

For standard terminology on graphs not discussed here, we refer the reader to \cite{ChaLesZha2016}.

\section{Proximity in $2$-connected outerplanar graphs}
\label{section:proximity-in-2-connected-OP}

In this section we present an upper bound on the proximity of a $2$-connected outerplanar graphs of 
order $n$ and maximum face length $q$.   
We prove our bound by considering a well chosen weight 
function on the vertices of $G^w$. Then we demonstrate that the boundary of the face corresponding to a 
median vertex of the weighted weak dual $G^w$ contains a vertex of small average distance in $G$. 

We first recall the following two results due to Leydold and Stadler \cite{LeySta1998} 
and Fleischner et al. \cite{FleGelHar1974}.

\begin{theorem}{\rm (\cite{LeySta1998})}
\label{theo:Outerplanar-unique-cycle}
An outerplanar graph is Hamiltonian if and only if it is 2-connected. Furthermore, every 
$2$-connected outerplanar graph contains a unique Hamiltonian cycle.
\end{theorem}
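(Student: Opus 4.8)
I would establish the equivalence in two implications and then treat the uniqueness claim separately. One implication is not special to outerplanar graphs, so I would dispose of it first: if $G$ is Hamiltonian and has order $n\ge 3$, then deleting any single vertex from a spanning cycle of $G$ leaves a spanning path, which is connected, so $G$ has no cut vertex and is $2$-connected.

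For the reverse implication I would argue directly from the plane embedding. Let $G$ be a $2$-connected outerplanar graph, drawn in its (unique) outerplanar embedding, and let $C$ denote the boundary of the unbounded face. I would invoke the standard fact that in a $2$-connected plane graph the boundary of every face is a cycle, so $C$ is a cycle. By the definition of an outerplanar embedding, every vertex of $G$ lies on the boundary of the exterior region, hence on $C$; as $C$ is a cycle passing through all $n$ vertices, it is a Hamiltonian cycle, and $G$ is Hamiltonian.

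The substantive part --- and the step I expect to be the main obstacle --- is the uniqueness of the Hamiltonian cycle, namely that $C$ is the only one. I would label the vertices $v_1,\dots,v_n$ in their cyclic order along $C$, so that the edges of $G$ consist of the boundary edges $v_iv_{i+1}$ (indices mod $n$) together with a set of \emph{chords}, and outerplanarity guarantees that no two chords cross in the interior. Suppose, for contradiction, that some Hamiltonian cycle $H$ uses a chord, and among all chords of $H$ choose $e=v_iv_j$ cutting off a shortest arc $A=\{v_i,v_{i+1},\dots,v_j\}$ of $C$; since $e$ is a chord, $A$ contains at least one interior vertex and omits at least one vertex of $G$. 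The key observation is that no edge of $G$ joins an interior vertex of $A$ to a vertex outside $A$: a boundary edge at an interior vertex of $A$ stays inside $A$, and a chord leaving $A$ would cross $e$, which outerplanarity forbids. Hence every $H$-edge at an interior vertex of $A$ lies inside $A$.

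I would then finish with a short counting argument. Because $H$ is a single cycle and $A$ omits a vertex, the induced subgraph $H[A]$ is a disjoint union of paths; every interior vertex of $A$ has both of its $H$-edges inside $A$ and so has degree $2$ in $H[A]$, which forces $H[A]$ to be a single path with endpoints $v_i$ and $v_j$. But $H[A]$ also contains $e=v_iv_j$, and a path cannot contain the edge joining its two endpoints unless it consists of that edge alone --- impossible, since $A$ has an interior vertex. This contradiction shows that $H$ uses no chord, whence $H=C$. I anticipate that the only delicate points are the non-crossing (planarity) step and the bookkeeping that pins $H[A]$ down to a single $v_i$--$v_j$ path.
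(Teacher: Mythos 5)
The paper offers no proof of this statement at all: it is imported verbatim from Leydold and Stadler \cite{LeySta1998} and used as a black box in the proofs of Theorems \ref{theo:proximity in 2-connected outerplana graph} and \ref{theo:rad in Outerpanar}. So there is no internal argument to compare yours against; what you have written is a correct, self-contained proof of the cited result. All three steps check out. (i) Hamiltonian implies no cut vertex is the trivial direction and needs nothing about outerplanarity. (ii) For the converse you invoke the standard fact that in a $2$-connected plane graph every face boundary is a cycle; applied to the outer face of an outerplanar embedding this cycle passes through all vertices, hence is Hamiltonian. (Note you only need \emph{some} outerplanar embedding here, so the parenthetical appeal to uniqueness of the embedding is unnecessary.) (iii) Your uniqueness argument is sound: if a second Hamiltonian cycle $H$ used a chord $e=v_iv_j$, then no edge of $G$ joins an interior vertex of the arc $A$ to a vertex outside $A$, since such a chord would interleave with $e$ and two interleaving chords drawn inside the outer cycle must cross; the degree count then forces $H[A]$ to be a single $v_i$--$v_j$ path that also contains the edge $v_iv_j$, impossible because $A$ has an interior vertex. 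One observation: your minimality choice of the arc is never actually used --- any chord of $H$ and either of its two arcs yields the same contradiction, since a chord leaves interior vertices on both sides. Two minor caveats worth stating explicitly: the equivalence requires $n\geq 3$ (under the paper's definition of $2$-connectedness, $K_2$ has no cut vertex yet is not Hamiltonian), and step (ii) rests on the face-boundary fact for $2$-connected plane graphs, which is standard but itself nontrivial, so it should be cited rather than treated as obvious.
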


\begin{proposition}{\rm (\cite{FleGelHar1974})}
\label{prop:dual-is-a-tree}
For any $2$-connected outerplanar graph $G$, its weak dual $G^w$ is a tree.
\end{proposition}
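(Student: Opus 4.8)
The plan is to turn the problem into a simple edge–vertex count by first using Theorem~\ref{theo:Outerplanar-unique-cycle} to fix the geometry of the embedding. Since $G$ is $2$-connected and outerplanar, its unique Hamiltonian cycle $C$ is exactly the boundary of the exterior face, so $C$ consists of $n$ edges passing through all $n$ vertices, and every one of the remaining $|E(G)|-n$ edges is a chord drawn inside the closed disk $D$ bounded by $C$. The interior faces of $G$ are precisely the regions into which the chords partition $D$, and, by definition, two such faces are adjacent in $G^w$ exactly when they abut along a common chord. The plan is then to show that $G^w$ is connected and has exactly one fewer edge than vertices, whence it is a tree.

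For the count I would apply Euler's formula $n-|E(G)|+f=2$, where $f$ is the total number of faces including the exterior one; this gives $|V(G^w)| = f-1 = |E(G)|-n+1$. For the edges I would use the standard fact that in a $2$-connected plane graph every edge lies on the boundary of exactly two faces: for an edge of $C$ one of these is the exterior face, while for a chord both are interior faces. Hence the chords are in bijection with the adjacencies recorded by $G^w$, so $|E(G^w)| = |E(G)|-n$. This yields $|E(G^w)| = |V(G^w)|-1$.

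It remains to verify that $G^w$ is connected, after which a connected (multi)graph with one fewer edge than vertices is forced to be a tree (in particular acyclic and simple). Connectedness I would obtain directly: any two interior faces can be joined by an arc inside $D$ that crosses the drawing of $G$ only transversally through chord-interiors, and the sequence of faces this arc visits is a walk between them in $G^w$.

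I expect the main obstacle to be the careful justification of the planar bookkeeping, above all the claim that $G^w$ carries no cycle. The conceptual reason this holds is an outerplanarity obstruction: a cycle in $G^w$ would be realised by a closed curve lying in $\operatorname{int}(D)$ and crossing only chords, and the region it encloses, being a subset of $\operatorname{int}(D)$, would contain no vertex of $G$ because every vertex sits on $C=\partial D$; yet crossing a chord once separates its two endpoints, placing a vertex inside the curve, a contradiction. Making either this argument or the Euler count fully rigorous requires only standard properties of $2$-connected plane graphs and the Jordan curve theorem, so the essential input is genuinely the fact, guaranteed by Theorem~\ref{theo:Outerplanar-unique-cycle}, that the outer boundary uses all $n$ vertices.
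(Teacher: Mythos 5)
The paper offers no proof of this proposition at all: it is imported as a known result of Fleischner, Geller and Harary \cite{FleGelHar1974}, so there is no internal argument to compare yours against. Your proof is correct and self-contained. Both of your mechanisms work: (i) the Euler count $|V(G^w)| = |E(G)|-n+1$ together with connectedness of $G^w$ (the arc argument through chord interiors is valid, and is helped by the fact that an arc in the open disk automatically avoids all vertices, since every vertex lies on $C=\partial D$); and (ii) the Jordan-curve obstruction to a cycle in $G^w$, which is the classical argument and needs no counting at all. The only assertion you state without justification is the \emph{bijection} between chords and edges of $G^w$, i.e., that two interior faces cannot share two distinct chords. This is true --- if $e_1\neq e_2$ were both shared by faces $F_1,F_2$, then since $F_1$ and $F_2$ lie on opposite sides of $e_1$, the chord $e_2$ would lie in the intersection of the closures of those two sides, which is $e_1$ itself, a contradiction --- but you do not actually need it: each edge of the (simple) weak dual arises from at least one chord and each chord from exactly one pair of interior faces, so $|E(G^w)| \leq |E(G)|-n = |V(G^w)|-1$, and connectedness forces equality, whence $G^w$ is a tree. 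With that small repair (or bypass) noted, the argument is complete; it also uses, implicitly, the standard fact that in a $2$-connected plane graph every face boundary is a cycle, which is what identifies the unique Hamiltonian cycle of Theorem \ref{theo:Outerplanar-unique-cycle} with the boundary of the exterior face.
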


Assume that $G$ is a connected graph, and $c: V(G) \rightarrow \mathbb{R}^{\geq 0}$ is a 
nonnegative weight function on the vertices of $G$. We define the \textit{weighted total distance} 
of $G$ with respect to $c$ as
\[
\sigma_c(G) = \sum_{\{x,y\} \in V(G)} c(x)c(y) d_G(x,y).
\]
and for any vertex $v$ in $G$, the \textit{weighted distance} of $v$ with respect to $c$ is 
\[
\sigma_c(v,G) = \sum_{w \in V(G) \setminus \{v\}} c(w) d(v,w).
\]
A vertex $v$ that minimises $\sigma_c(v,G)$ among all vertices in $G$ is called 
a $c$-\textit{median vertex}. 
If $c$ is the weight function with $c(v)=1$ for all $v \in V(G)$, then the $c$-median vertices 
are the usual median vertices of $G$. 

Let \( T \) be a tree, and let \( c: V(T) \to \mathbb{R}^{\geq 0} \) be a nonnegative weight function on the vertices of \( T \). 
For a vertex \( v \) in \( T \), the \( c \)-branch weight \( \text{bw}_c(v, T) \) is the maximum weight of any component of \( T - v \).  If $X$ is a set of vertices, then we write $c(X)$ for $\sum_{x\in X} c(x)$.

\begin{proposition}{\rm (\cite{KarHak1979})} \label{prop:median-branchweight-weighted}  
Let $T$ be a tree, and let $c: V(G) \to \mathbb{R}^{\geq 0}$ be a nonnegative weight function on 
the vertices of $G$. A vertex $v$ of $G$ is a $c$-median vertex of $T$ if and only if 
${\rm bw}_c(v, T) \leq \frac{N}{2}$, where $N=c(V(T))$.
\end{proposition}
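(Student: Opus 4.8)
The plan is to analyse how the weighted distance $\sigma_c(\cdot, T)$ changes as one moves between adjacent vertices, and then to lift this local information to a global statement. First I would record the single observation underpinning both directions: if $u$ and $v$ are adjacent vertices of $T$, then deleting the edge $uv$ splits $T$ into two subtrees, $T_v$ containing $v$ and $T_u$ containing $u$. Passing from $v$ to $u$ decreases the distance to every vertex of $T_u$ by exactly $1$ and increases the distance to every vertex of $T_v$ by exactly $1$. Writing the transmissions as sums over all of $V(T)$ (the term for the base vertex vanishes), this yields the key identity
\[
\sigma_c(u, T) - \sigma_c(v, T) = c(T_v) - c(T_u) = N - 2c(T_u),
\]
so that the move is strictly improving precisely when $c(T_u) > N/2$. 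Note that $T_u$ is exactly the component of $T - v$ containing $u$.

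For the necessity direction I would argue by contraposition. Suppose ${\rm bw}_c(v, T) > N/2$; then some component $C$ of $T - v$ has $c(C) > N/2$. Letting $u$ be the neighbour of $v$ lying in $C$, the subtree $T_u$ coincides with $C$, so the identity above gives $\sigma_c(u, T) - \sigma_c(v, T) = N - 2c(C) < 0$. Hence $v$ is not a $c$-median vertex, which establishes that any $c$-median vertex must satisfy ${\rm bw}_c(v, T) \le N/2$.

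For the sufficiency direction, suppose ${\rm bw}_c(v, T) \le N/2$. For every neighbour $u$ of $v$ the component $T_u$ of $T - v$ has weight at most ${\rm bw}_c(v, T) \le N/2$, so the identity gives $\sigma_c(u, T) \ge \sigma_c(v, T)$; thus $v$ is at least as good as each of its neighbours. The remaining work is to upgrade this local minimality to global minimality, which I would do by showing that $\sigma_c$ is convex along any path of $T$ emanating from $v$. Fix a path $v = x_0, x_1, \dots, x_k$ and let $A_i$ denote the component of $T - x_ix_{i+1}$ containing $x_i$; by the identity the consecutive increment $\sigma_c(x_{i+1}, T) - \sigma_c(x_i, T)$ equals $2c(A_i) - N$. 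Since the sets are strictly nested, $A_0 \subsetneq A_1 \subsetneq \cdots$ (each $A_{i+1}$ contains $A_i$ together with $x_i$), the weights $c(A_i)$ are non-decreasing, so the increments form a non-decreasing sequence. As the first increment is already non-negative by local minimality, all increments are non-negative, whence $\sigma_c(x_k, T) \ge \sigma_c(v, T)$. Because $x_k$ is arbitrary, $v$ is a $c$-median vertex.

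The step I expect to require the most care is the convexity argument: one must verify that the nesting $A_i \subseteq A_{i+1}$ is correct, since it relies on the path structure so that the ``behind'' side of each successive edge genuinely grows, and one must handle the degenerate cases where some weights vanish and the inequalities need not be strict. Everything else follows directly from the single distance-shift identity.
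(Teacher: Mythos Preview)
The paper does not supply a proof of this proposition; it is quoted from Kariv and Hakimi \cite{KarHak1979} as a known result and used as a black box. Your argument is correct and is essentially the standard one: the edge-shift identity $\sigma_c(u,T)-\sigma_c(v,T)=N-2c(T_u)$ for adjacent $u,v$, contraposition for necessity, and convexity of $\sigma_c$ along paths for sufficiency. One small slip: in the parenthetical justifying the nesting you write that $A_{i+1}$ contains $A_i$ ``together with $x_i$'', but $x_i$ already lies in $A_i$; the new vertex witnessing strict containment is $x_{i+1}$. This does not affect the argument, since only the (weak) inclusion $A_i\subseteq A_{i+1}$ and non-negativity of $c$ are needed to conclude that the increments $2c(A_i)-N$ are non-decreasing.
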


For the function $c$ that maps every vertex  to $1$, 
Proposition \ref{prop:median-branchweight-weighted} implies that $T$ has a vertex $v$ so that
no component of $T-v$ contains more than half the vertices of $T$. 
The following lemma proves that a similar statement holds for $2$-connected
outerplanar graphs.

\begin{lemma}
\label{lemm:face-component-has-at-most-n-2/2 vertices}
Let $G$ be a $2$-connected outerplane graph of order $n$. Then there exist a face, say $F$, such that every component of $G - V(F)$ has at most $\frac{n-2}{2}$ vertices.
\end{lemma}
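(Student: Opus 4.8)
The plan is to transfer the problem to the weak dual tree $G^w$ and then apply the weighted median characterisation of Proposition~\ref{prop:median-branchweight-weighted}. Since $G$ is $2$-connected outerplanar, by Theorem~\ref{theo:Outerplanar-unique-cycle} it has a unique Hamiltonian cycle $C$, which bounds the outer face, and by Proposition~\ref{prop:dual-is-a-tree} the weak dual $G^w$ is a tree whose vertices are the interior faces of $G$. I would call an edge of $G$ a \emph{boundary edge} if it lies on $C$ and a \emph{chord} otherwise. Each of the $n$ boundary edges lies on exactly one interior face, while each chord lies on exactly two. This suggests defining the weight function $c \colon V(G^w) \to \mathbb{R}^{\geq 0}$ by letting $c(f)$ be the number of boundary edges on the face $f$; then $N := c(V(G^w)) = n$.

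The key structural step is to describe the components of $G - V(F)$ for an interior face $F$ in terms of $G^w$. Listing the vertices of $F$ in the cyclic order they inherit from $C$, each edge of $F$ is either a boundary edge or a chord. A chord $e=\{a,b\}$ of $F$, together with the arc of $C$ between $a$ and $b$ on the side away from $F$, bounds a region $R_e$; the faces inside $R_e$ are exactly the vertices of one subtree $T_e$ of $G^w - F$, and the neighbours of $F$ in $G^w$ are in bijection with the chords of $F$. I would then show that, after deleting $V(F)$, the vertices lying strictly inside the arc bounding $R_e$ form a single component of $G - V(F)$: they are pairwise connected along $C$, and any path leaving $R_e$ must pass through $a$ or $b$, both of which have been deleted. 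Since $G$ is outerplanar, all vertices of the region $R_e$ lie on its bounding cycle, which consists of the chord $e$ and, say, $m_e$ boundary edges; hence $R_e$ has $m_e+1$ vertices and this component has $m_e-1$ vertices. As the boundary edges inside $R_e$ are precisely those carried by the faces of $T_e$, we obtain $c(V(T_e)) = m_e$, so every component of $G - V(F)$ has exactly $c(V(T_e)) - 1$ vertices for the corresponding chord $e$.

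With this dictionary in place, I would take $F$ to be a $c$-median vertex of $G^w$. By Proposition~\ref{prop:median-branchweight-weighted}, every component $T_e$ of $G^w - F$ satisfies $c(V(T_e)) \leq N/2 = n/2$, so every component of $G - V(F)$ has at most $\frac{n}{2}-1 = \frac{n-2}{2}$ vertices, as required. In the degenerate case $G=C$ there is a single interior face and no chords, so $G - V(F)$ is empty and the bound holds vacuously.

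The main obstacle I anticipate is the structural bookkeeping of the second paragraph: rigorously establishing the correspondence between the chords of $F$, the subtrees of $G^w - F$, and the components of $G - V(F)$, and verifying that the vertex count of each component equals the corresponding subtree weight minus one. This relies essentially on the uniqueness of the Hamiltonian cycle, which makes the cyclic order of the vertices of $F$ unambiguous and consistent with $C$, and on the fact that in an outerplanar graph every vertex lies on the outer boundary, which is what lets me count the vertices of $R_e$ directly from its bounding cycle.
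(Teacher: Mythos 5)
Your proposal is correct and follows essentially the same route as the paper: pass to the weak dual tree $G^w$ and apply the weighted median characterisation of Proposition~\ref{prop:median-branchweight-weighted} to a suitable weight function, identifying components of $G-V(F)$ with subtrees of $G^w - F$. The only difference is the choice of weights -- the paper assigns each interior face the weight $\ell_i - 2$ (total $n-2$, so that subtree weight equals component size exactly), whereas you count Hamiltonian-cycle edges per face (total $n$, so that component size is subtree weight minus one) -- and both choices give the bound $\frac{n-2}{2}$; your explicit verification of the chord--subtree--component correspondence is in fact more detailed than the paper's, which leaves that step implicit.
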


\begin{proof}
Let $G$ be a $2$-connected outerplanar graph of order $n$. Denote the distinct interior faces of $G$ by 
$f_1, f_2, \ldots, f_t$ and their corresponding vertices in the weak dual $G^w$ by 
$f_1, f_2, \ldots, f_t$. 
By Proposition \ref{prop:dual-is-a-tree}, $G^{w}$ is a tree. 
Now we define a nonnegative weight function on the vertices of $G^w$ as follows, 
for all $i\in\{1,\ldots,t\} $, define $c(f_i)=\ell_{i}-2$, where $\ell_{i}$ indicates the length of face 
$f_i$. It is easy to prove by induction on the number of interior faces of $G$ that $c(V(G^w))=n-2$. 
Let $f_j$ be a $c$-median vertex of 
the weighted tree $G^w$. By Proposition \ref{prop:median-branchweight-weighted} we have 
${\rm bw}_c(f_i,G^w) \leq \frac{n-2}{2}$, so every component of $G - V(f_j)$ has at most 
$\frac{n-2}{2}$ vertices. Letting $f_j=F$, completes the proof of the lemma.
\end{proof}
 
 \begin{theorem}{\rm (\cite{Dan2012})}
\label{theo:total-weighted-distance-of-cycle-ch4}
Let $G$ be a cycle of order $n \geq 3$ with a nonnegative vertex weight function $c$ such that $c(G)=N$. Then
\[
\sigma_c(G) \leq
\begin{cases} 
\dfrac{nN^2}{8}  & \text{if } n \text{ is even}, \\[10pt]
\dfrac{(n^2-1)N^2}{8n}  & \text{if } n \text{ is odd}.
\end{cases}
\]
When $n$ is even, equality holds if and only if the weights of opposite vertices in $G$ are equal. When $n$ is odd, equality holds if and only if all vertices have the same weight.
\end{theorem}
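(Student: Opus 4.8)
The plan is to express the weighted total distance as a nonnegative combination of contributions coming from \emph{cuts} of the cycle, thereby reducing the whole problem to a one-line optimisation of a sum of products $c(S)\,c(V\setminus S)$. Write the vertices of the cycle $G$ as $v_0,v_1,\dots,v_{n-1}$ in cyclic order, with indices read modulo $n$, and set $m=\lfloor n/2\rfloor$. For a subset $S\subseteq V(G)$ let $\delta_S$ denote the cut pseudometric, so that $\delta_S(x,y)=1$ when exactly one of $x,y$ lies in $S$ and $\delta_S(x,y)=0$ otherwise. The central structural step I would carry out is to show that the cycle metric decomposes over the ``half-arcs'' $S_k=\{v_{k+1},v_{k+2},\dots,v_{k+m}\}$ as
\[ d_G(x,y)=\sum_{k=0}^{m-1}\delta_{S_k}(x,y)\quad(n\text{ even}),\qquad d_G(x,y)=\tfrac12\sum_{k=0}^{n-1}\delta_{S_k}(x,y)\quad(n\text{ odd}). \]
By the vertex-transitivity of the cycle it suffices to verify this for $x=v_0$ and $y=v_t$ with $1\le t\le m$, which reduces to counting how many of the arcs $S_k$ separate $v_0$ from $v_t$; a direct count gives $t$ in the even case and $2t$ in the odd case, as required.

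The second step uses the elementary identity $\sum_{\{x,y\}}c(x)c(y)\,\delta_S(x,y)=c(S)\,c(V\setminus S)$, valid for every cut, since a pair contributes exactly when it is separated by $S$. Substituting the decomposition above therefore gives the two clean formulas
\[ \sigma_c(G)=\sum_{k=0}^{m-1}c(S_k)\bigl(N-c(S_k)\bigr)\quad(n\text{ even}),\qquad \sigma_c(G)=\tfrac12\sum_{k=0}^{n-1}c(S_k)\bigl(N-c(S_k)\bigr)\quad(n\text{ odd}). \]
For $n$ even I would bound each summand by $c(S_k)\bigl(N-c(S_k)\bigr)\le N^2/4$, which yields $\sigma_c(G)\le mN^2/4=nN^2/8$. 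Equality forces $c(S_k)=N/2$ for every $k$, and comparing the sums over consecutive arcs, for which $c(S_{k+1})-c(S_k)=c(v_{k+m+1})-c(v_{k+1})$, shows (using also $c(G)=N$) that this is equivalent to $c(v_j)=c(v_{j+m})$ for all $j$, i.e.\ to equal weights on opposite vertices.

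For $n$ odd the per-term bound is not tight, and this is where a little more care is needed. Writing $x_k=c(S_k)$ and noting that each vertex lies in exactly $m$ of the arcs, one has $\sum_{k=0}^{n-1}x_k=mN$, so $\sigma_c(G)=\tfrac12\bigl(mN^2-\sum_k x_k^2\bigr)$; maximising $\sigma_c(G)$ thus amounts to \emph{minimising} $\sum_k x_k^2$ under a fixed sum. By Cauchy--Schwarz $\sum_k x_k^2\ge (mN)^2/n$, and after simplification with $m(m+1)=(n^2-1)/4$ this gives exactly $\sigma_c(G)\le (n^2-1)N^2/(8n)$. Equality requires all $x_k$ equal, and the consecutive-arc comparison then forces $c(v_j)=c(v_{j+m})$ for every $j$; since $\gcd(m,n)=1$ when $n=2m+1$, iterating this relation shows $c$ is constant, matching the stated equality condition.

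The step I expect to be the main obstacle is the first one: pinning down the correct arcs and coefficients in the cut decomposition and carrying out the separation count cleanly through the modular index bookkeeping (and treating the two parities uniformly). Once that structural identity is established, the bounds and both equality cases follow from the two short optimisation arguments above, with no spectral or trigonometric computation required. As a side benefit, this approach never uses the hypothesis $c\ge 0$ beyond the nonnegativity of the cut coefficients, so the same proof in fact bounds $\sigma_c(G)$ for arbitrary real weights of total $N$.
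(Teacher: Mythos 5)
Your proposal is correct, but there is nothing in the paper to compare it against: Theorem \ref{theo:total-weighted-distance-of-cycle-ch4} is stated as a quoted result from \cite{Dan2012}, and the paper gives no proof of it. Judged on its own, your argument holds up. The cut decomposition is right: for $n=2m$ the arcs $S_0,\dots,S_{m-1}$ represent the $m$ distinct antipodal cuts of the cycle, and the pair $(v_0,v_t)$ with $1\le t\le m$ is separated by exactly the cuts $S_0,\dots,S_{t-1}$; for $n=2m+1$ a pair at distance $t$ is separated by exactly $2t$ of the $n$ arcs. One small point you should write out explicitly: reducing to $x=v_0$ needs more than vertex-transitivity of the cycle --- it needs that the \emph{family of cuts} $\{S_k,V\setminus S_k\}$ is itself invariant under rotation; this is true, since a rotation sends $S_k$ to $S_{k+1}$ and, when $n=2m$, $\delta_{S_m}=\delta_{S_0}$ because $S_m=V\setminus S_0$, so the verification is legitimate. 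The identity $\sum_{\{x,y\}}c(x)c(y)\delta_S(x,y)=c(S)\,c(V\setminus S)$, the per-cut bound $c(S_k)\bigl(N-c(S_k)\bigr)\le N^2/4$ in the even case, and the Cauchy--Schwarz step with $\sum_k c(S_k)=mN$ and $m(m+1)=(n^2-1)/4$ in the odd case all check out, as do both equality analyses: your use of $c(G)=N$ to obtain $c(v_0)=c(v_m)$ closes the even case correctly, and $\gcd(m,2m+1)=1$ correctly upgrades $c(v_j)=c(v_{j+m})$ for all $j$ to constancy in the odd case. What your approach buys is a short, self-contained, purely combinatorial proof that would make the paper independent of \cite{Dan2012}; your closing remark that nonnegativity of $c$ is never used for the inequality is also correct, since both optimisation steps are valid for arbitrary real weights, though the application in the paper only needs the nonnegative case.
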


The following corollary is an immediate consequence of the above Theorem \ref{theo:total-weighted-distance-of-cycle-ch4}.

\begin{corollary}
\label{coro:c-median-vertex-of-a-weighted-cycle}
Let $G$ be a cycle of order $n \geq 3$ with a nonnegative vertex weight function $c$ with $c(G)=N$. 
Then there exists a vertex $v$ such that 
\[
\sigma_c(v, G) \leq
\begin{cases} 
\dfrac{nN}{4}  & \text{if } n \text{ is even}, \\[10pt]
\dfrac{(n^2-1)N}{4n}  & \text{if } n \text{ is odd}.
\end{cases}
\]
\end{corollary}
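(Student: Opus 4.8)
The plan is to deduce the per-vertex bound from the global bound in Theorem~\ref{theo:total-weighted-distance-of-cycle-ch4} by a weighted averaging argument over the vertices of the cycle. The key identity I would establish first relates the two notions of weighted distance, namely
\[
\sum_{v\in V(G)} c(v)\,\sigma_c(v,G) = 2\,\sigma_c(G).
\]
This follows by expanding $\sigma_c(v,G)=\sum_{w\neq v} c(w)\,d(v,w)$ and observing that the resulting double sum ranges over all ordered pairs $(v,w)$ of distinct vertices; each unordered pair $\{x,y\}$ then contributes the term $c(x)c(y)\,d(x,y)$ exactly twice, once as $(x,y)$ and once as $(y,x)$, so the total equals $2\sigma_c(G)$.

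Next I would interpret the left-hand side as a weighted sum and normalise it. Assuming $N=c(V(G))>0$, dividing by $N$ gives
\[
\frac{1}{N}\sum_{v\in V(G)} c(v)\,\sigma_c(v,G) = \frac{2\,\sigma_c(G)}{N},
\]
and the right-hand side is a convex combination of the values $\sigma_c(v,G)$ with nonnegative coefficients $c(v)/N$ summing to $1$. Hence there must exist at least one vertex $v$ (in fact one with $c(v)>0$) for which $\sigma_c(v,G)\le \frac{2\sigma_c(G)}{N}$.

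Finally I would substitute the two cases of Theorem~\ref{theo:total-weighted-distance-of-cycle-ch4}. When $n$ is even this yields $\sigma_c(v,G)\le \frac{2}{N}\cdot\frac{nN^2}{8}=\frac{nN}{4}$, and when $n$ is odd it yields $\sigma_c(v,G)\le \frac{2}{N}\cdot\frac{(n^2-1)N^2}{8n}=\frac{(n^2-1)N}{4n}$, which are precisely the claimed bounds.

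The only subtlety — hardly an obstacle — is the degenerate case $N=0$, where the division by $N$ is invalid. In that case every weight is $0$, so $\sigma_c(v,G)=0$ for every vertex $v$ and the inequality holds trivially; I would dispose of this at the outset. Everything else is bookkeeping, which is why the statement is correctly described as an immediate consequence of the preceding theorem.
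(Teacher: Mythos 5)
Your proposal is correct and is essentially the paper's own argument: both rest on the identity $\sum_{v} c(v)\,\sigma_c(v,G) = 2\sigma_c(G)$ together with Theorem~\ref{theo:total-weighted-distance-of-cycle-ch4}, the only cosmetic difference being that you phrase it as a direct weighted-averaging argument while the paper phrases it as a contradiction at a $c$-median vertex. Your explicit treatment of the degenerate case $N=0$ is a small point of extra care that the paper glosses over, but it does not change the substance.
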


\begin{proof} 
We only prove the corollary for even $n$; the proof for odd $n$ is analogous. 
Let \( G \) be a cycle of order \( n \), where $n$ is even, with a nonnegative vertex weight function 
\( c \), and let \( v \) be a \( c \)-median vertex. Suppose to the contrary that 
\( \sigma_c(v, G) > \dfrac{nN}{4} \). Since $\sigma_c(w,G) \geq \sigma_c(v,G)$ for all $w\in V(G)$, 
\[
\sigma_c(G) = \frac{1}{2}\sum_{w \in V(G)} c(w) \sigma_c(w, G) > \frac{1}{2}\sum_{w \in V(G)} c(w) \cdot \frac{nN}{4} = \frac{nN^2}{8}.
\]
However, the above inequality contradicts Theorem \ref{theo:total-weighted-distance-of-cycle-ch4}. 
Hence $\sigma_c(v, G)$ cannot be more than $\frac{nN}{4}$, this completes the proof of the corollary.
\end{proof}

We are now ready to establish an upper bound on the proximity of a $2$-connected outerplanar graph of order 
$n$ with bounded face length.

\begin{theorem}\label{theo:proximity in 2-connected outerplana graph}
If $G$ is a $2$-connected outerplane graph of order $n$, whose internal faces have lengths at most $q$, then 
\begin{equation*} \label{eq:pi(G) in terms of q,n}
\pi(G) \leq
   \dfrac{n+5}{8}+\dfrac{q^2-4q+9}{8(n-1)}.
\end{equation*}
\end{theorem}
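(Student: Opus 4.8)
The plan is to exhibit a single vertex $v$ with
\[
\sigma(v,G)\ \le\ \tfrac18\big(n^2+4n+(q-2)^2\big),
\]
since dividing by $n-1$ reproduces precisely the stated bound. First I would apply Lemma~\ref{lemm:face-component-has-at-most-n-2/2 vertices} to fix a face $F$ whose ``branches'' are all small. Writing $C_F=x_1x_2\cdots x_\ell$ for the boundary cycle of $F$ (so $\ell\le q$), the Hamiltonian cycle of Theorem~\ref{theo:Outerplanar-unique-cycle} is cut by the $x_i$ into arcs $A_i$ joining $x_i$ to $x_{i+1}$, and every vertex outside $F$ lies on a unique such arc. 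Because no chord of $G$ crosses the face-edge $x_ix_{i+1}$, the internal vertices of $A_i$ form a single component of $G-V(F)$; writing $W_i$ for their number we get $\sum_i W_i=n-\ell$ and, crucially, $W_i\le\frac{n-2}{2}$ for every $i$.

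Next I would estimate distances from a prospective median $v=x_j\in V(C_F)$. For $u$ on arc $A_i$ the valid bound $d_G(v,u)\le\min\big(d_{C_F}(v,x_i)+d_i(u),\,d_{C_F}(v,x_{i+1})+d_{i+1}(u)\big)$ holds, where $d_i(u),d_{i+1}(u)$ are the within-branch distances to the two attachment vertices; since the arc itself lies in the branch these are at most the arc-distances, giving $\sum_{u\in A_i}\min(d_i(u),d_{i+1}(u))\le\frac14(W_i+1)^2$ \emph{independently of $q$}. Summing over arcs, adding the mutual distances of the $x_i$, and bounding each ``distance to an edge'' by the average of its two endpoint distances, I obtain
\[
\sigma(v,G)\ \le\ \sigma_c(v,C_F)\ +\ \tfrac14\textstyle\sum_i W_i^2\ +\ (\text{lower order}),
\qquad c(x_i)=1+\tfrac12(W_{i-1}+W_i),
\]
a weight of total mass $c(V(C_F))=n$. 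The essential structural point is that $q$ has vanished from the branch term and survives only through $\ell\le q$ in the cycle term; this is why a triangulated strip is the extremal branch shape whatever $q$ is.

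I would then pick $v$ to be a $c$-median of $C_F$ (Proposition~\ref{prop:median-branchweight-weighted}). Splitting $c=1+w$ with $w_i=\tfrac12(W_{i-1}+W_i)$, the ``uniform'' part of $\sigma_c(v,C_F)$ is the cycle transmission, equal to $\tfrac14\ell^2$ (up to parity) at \emph{every} vertex of $C_\ell$, so it costs nothing to choose $v$ to minimise only the ``branch'' part $\sum_i W_i\overline d_i$, where $\overline d_i$ is the mean distance from $v$ to the two ends of the $i$-th edge. Since $(\ell-2)^2\le(q-2)^2$ for $2\le\ell\le q$, it then suffices to prove
\[
\min_v\ \sum_i W_i\,\overline d_i\ +\ \tfrac14\textstyle\sum_i W_i^2\ \le\ \tfrac18\big(n^2+4n-\ell^2-4\ell+4\big),
\qquad \sum_i W_i=n-\ell,\ \ 0\le W_i\le\tfrac{n-2}{2}.
\]

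The main obstacle is exactly this last min–max over the weights. The term $\sum_i W_i\overline d_i$ is a weighted $1$-median on the cycle, small precisely when the heavy branches cluster near $v$, whereas $\tfrac14\sum_i W_i^2$ is convex and large precisely when the weight is concentrated in few branches; the two effects oppose one another, so neither may be bounded in isolation --- replacing each by its crude individual maximum already overshoots the target near $\ell\approx n/2$. I would resolve this by convexity: for fixed $v$ the objective is convex in $W$, hence maximised at an extreme point with at most two nonzero $W_i$ pinned at the cap $\frac{n-2}{2}$, and the worst configuration puts two heavy branches on the two edges incident to $v$ (a ``bent snake''), for which $\sum_i W_i\overline d_i=O(n)$ while $\tfrac14\sum_i W_i^2$ is maximal; a direct computation then closes the inequality, with the cap $W_i\le\frac{n-2}{2}$ playing the decisive role. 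Finally I would track the parity cases (even/odd $\ell$ and $n$) and the several $O(n)$ remainders to confirm that they are absorbed and that the additive constant emerges exactly as claimed.
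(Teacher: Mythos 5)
Your setup coincides with the paper's own: the face $F$ from Lemma~\ref{lemm:face-component-has-at-most-n-2/2 vertices}, the decomposition of the Hamiltonian cycle into arcs hanging off consecutive edges of $F$, the weight $c(x_i)=1+\tfrac12(W_{i-1}+W_i)$ of total mass $n$, the split $\sigma(v,G)\le\sigma_c(v,C_F)+\sum_i\bigl\lfloor\tfrac14(W_i+1)^2\bigr\rfloor$, and the reduced target inequality are all exactly what the paper does, and your target constant is correct. The genuine gap is in the final optimisation, which is the crux of the whole proof. Your convexity argument does not cohere: in $\max_W\min_v f(W,v)$ the minimising $v$ depends on $W$, so you cannot both let $v$ be the $c$-median and treat $v$ as fixed while you push $W$ to an extreme point of the weight polytope. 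If you genuinely fix $v$ first (which is legitimate, since $\min_v f\le f(\cdot,v)$), then the convex maximum over $W$ places the heavy branches as \emph{far} from $v$ as possible --- antipodal to $v$, not incident to it --- and with the cap $W_i\le\tfrac{n-2}{2}$ the resulting bound overshoots the target near $\ell\approx n/2$; your own observation that ``crude individual maxima overshoot'' is precisely this computation. Moreover, the configuration you name as extremal (two heavy branches on the edges incident to $v$, the ``bent snake'') is in fact far from worst: there $\sum_i W_i\,\overline{d}_i=O(n)$ and the total falls short of the target by roughly $\ell(n-\ell)/4$. The true extremal configuration attaches the two heavy branches at \emph{antipodal} edges of $F$ --- as in the paper's sharpness example $H_{n,q}$ --- where the linear term really is of order $n\ell/4$ for every choice of $v$.

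What is missing is the device that handles the coupling between the weight distribution and the location of the median, and the paper supplies it as a case split on concentration. If every branch satisfies $W_i\le\lfloor\tfrac{n-\ell}{2}\rfloor$, the two terms \emph{can} be bounded separately after all: Corollary~\ref{coro:c-median-vertex-of-a-weighted-cycle} gives $\sigma_c(v,C_F)\le n\ell/4$ at a $c$-median $v$ irrespective of where the weight sits, and the branch term, maximised under the \emph{tighter} cap $(n-\ell)/2$ rather than $(n-2)/2$, is at most $(n-\ell+2)^2/8$; these add up to exactly $\tfrac18\bigl(n^2+4n+(\ell-2)^2\bigr)$, with no slack to lose. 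If instead some branch has $W_i\ge\tfrac{n-\ell+1}{2}$ (more than half the outside weight), one abandons the median altogether, takes $v$ to be an attachment vertex of that heavy branch, and bounds $\sigma_c(v,C_F)$ by a direct estimate using the fact that at least $W_i$ of the weight sits on $v$ and its neighbour. Without this dichotomy (or an equivalent mechanism) your argument does not close, so as written the proposal has a genuine gap rather than a different proof.
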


\begin{figure}[H]
\begin{center}
\begin{tikzpicture}[
    scale=0.5,inner sep=0.8mm,
  vertex/.style={circle, thick, draw},
    edge/.style={draw, thick},
    dashededge/.style={draw, thick, dashed},
    face/.style={ellipse, draw, fill=red!20, minimum width=1.8cm, minimum height=2.8cm, inner sep=0pt},
    every label/.style={font=\large}
]

[
\node[vertex, label=above:$v_0$] (v0) at (-5,3) {};
\node[vertex, label=right:$v_1$] (v1) at (-2,2) {};
\node[vertex, label=right:$v_2$] (v2) at (-1,0) {};
\node[vertex, label=below:$v_3$] (v3) at (-2,-2) {};
\node[vertex, label=below:$v_i$] (v4) at (-5,-3) {};
\node[vertex, label=left:$v_{i+1}$] (v5) at (-8,-2) {};
\node[vertex, label=left:$v_{k-2}$] (v6) at (-9,0) {};
\node[vertex, label=left:$v_{k-1}$] (v7) at (-8,2) {};

\node[below] at (-5,0) {$F$} ;

\draw[edge] (v0) -- (v1);
\draw[edge] (v1) -- (v2);
\draw[edge,very thick, red] (v2) -- (v3);
\draw[dashededge, very thick, red] (v3) -- (v4);
\draw[edge] (v4) -- (v5);
\draw[dashededge, very thick, red] (v5) -- (v6);
\draw[edge, very thick, red] (v6) -- (v7);
\draw[edge] (v7) -- (v0);


\draw[-, very thick, red] (v0)..controls (-3,6.5) and (-2.5,6.5)..(v1); 

\node[above] at (-3.5,3.5) {$P_{0}$} ;

\draw[-, very thick, red] (v1) .. controls (3,4) and (1.5,1.5) .. (v2); 

\node[above] at (0,1.5) {$P_{1}$} ;
   
   \draw[-, very thick, red] (v4) .. controls (-8.5,-7.5) and (-8,-3.5) .. (v5); 
   
   \node[above] at (-6.6,-3.5) {$P_{i}$} ;
   
   \draw[edge] (-8,-3.5) -- (-5.27,-3.5);
   
    \draw[edge] (-7.9,-4.5) -- (-5.7,-4);

\draw[-, very thick, red] (v7) .. controls (-11,7) and (-6.5,5) .. (v0); 

  \draw[edge] (-8.4,2.5) -- (-6,4);
  
  \node[above] at (-8,3.5) {$P_{k-1}$} ;

\end{tikzpicture}{\vspace{-1.5cm}}
\caption{A 2-connected outerplanar graph $G$ with internal face $F$. The Hamiltonian cycle 
$C$ is indicated in red.}
\label{fig:outerplanar graph}
\end{center}
\end{figure}
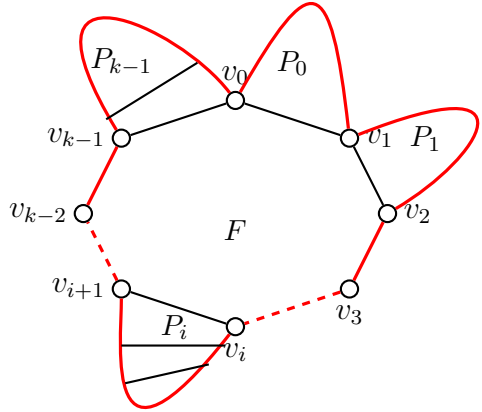

\begin{proof}
Let $G$ be a $2$-connected outerplane graph of order $n$, with face lengths bounded by $q$. 
By Lemma \ref{lemm:face-component-has-at-most-n-2/2 vertices}, $G$ contains a face $F$ with the property that every component of $G - V(F)$ has at most $\frac{n-2}{2}$ vertices.

Label the vertices of \( F \) as \( v_0, v_1, \ldots, v_{k-1} \) in clockwise order, such that 
\( v_iv_{i+1} \in E(G) \) for \( 1 \leq i \leq k-1 \), where indices are taken modulo $k$. 
By Theorem \ref{theo:Outerplanar-unique-cycle}, there exists a unique Hamiltonian cycle $C$. 
Let $C: u_1, u_2,\ldots,u_n, u_1$. Consider a segment $P: u_i, u_{i+1},\ldots,u_j$ of 
$C$ whose ends $u_i=v_i$ and $u_j=v_j$ are on the boundary of $F$, but its internal vertices $u_{i+1},\ldots,u_{j-1}$ are not. Then \( u_i \) and \( u_j \) must be consecutive on \( F \); otherwise, combining \( P \) with the subpath of \( F \) from \( u_i \) to \( u_j \) would form a cycle that encloses other vertices of \( G \). This would contradict the outerplanarity of \( G \). Hence we may assume that $C$ passes through the vertices of $F$ in the order $v_0, v_1,v_2, \ldots, v_{k-1}, v_0$. Denote the $(v_i, v_{i+1})$-segment of $C$ by $P_i$. Then $C$ equals $v_0,P_0,v_1,P_1,v_2,\ldots, P_{k-2},v_{k-1},P_{k-1},v_0$. 

Consider the spanning subgraph $H$ of $G$, whose edge set is $E(F) \cup \left[E(P_0)\cup E(P_1)\cdots \cup E(P_{k-1})\right]$. Clearly, the distance between two vertices in $G$ is at most their distance in $H$. Denote the number of internal vertices of $P_i$ by $p_i$. For an illustration see Figure \ref{fig:outerplanar graph}. 

Consider the graph \( H \) as a weighted graph where each vertex has a weight of 1. We now define a new weight function by reallocating each weight of a vertex to the nearest vertex in \( F \), where the weight of a vertex that has two nearest vertices in $F$ is split between these two vertices. More precisely, we define the weight function $c$ on the vertices of $F$ by 
\[ c(v_i) = 1 + \frac{1}{2} \left(p_i + p_{i-1}\right), \] where the indices are taken modulo $k$ and vertices not on $F$ have weight $0$. We now show that the total distance in $H$ of a vertex $w \in V(F)$ can be expressed as the sum of $\sigma_c(w,F)$ and a term that depends only on the 
$p_i$.

Fix a vertex $w \in V(F)$, $i \in \{0,1,\ldots,k-1\}$ and consider the sum of the distances between $w$ and the internal vertices of $P_i$. Let $P_i$ be the path $v_i, u_1, u_2,\ldots,u_{p_i},v_{i+1}$. First assume that $p_i$ is even. Then shortest paths between $w$ and $u_j$ can be chosen to pass through $v_i$ if $j \in \{1,2,\ldots,\frac{p_i}{2}\}$, and to pass through $v_{i+1}$ if $j \in \{\frac{p_i}{2}+1, \frac{p_i}{2}+2,\ldots,p_i\}$. Hence
\begin{align*}
\sum_{j=1}^{p_i} d_H(w,u_j)
 =& \; \sum_{j=1}^{\frac{p_i}{2}} [d_{H}(w,v_i) + d_{H}(v_i,u_j) ]
  + \sum_{j=\frac{p_i}{2}+1}^{p_i} [d_H(w,v_{i+1}) + d_H(v_{i+1},u_j) ] \\
  =& \; \frac{1}{2}p_i d_H(w,v_i) 
    + \frac{1}{2}p_i d_H(w,v_{i+1})
  + \frac{1}{4}p_i^2 + \frac{1}{2}p_i.
\end{align*}
Similarly, if $p_i$ is odd, then shortest paths between $w$ and $u_j$ can be chosen to pass through $v_i$ if $i \in \{1,2, \ldots, \frac{p_i-1}{2} \}$
and  through $v_{i+1}$ if $i \in \{ \frac{p_i+3}{2}, \frac{p_i+5}{2}, \ldots, p_i \}$, while for 
$j=\frac{p_i+1}{2}$ we have 
$d_H(w,v_j) \leq \frac{d_H(w,v_i) + d_H(w,v_{i+1})}{2} + \frac{p_{i+1}}{2}$. Hence we obtain

\[ 
 \sum_{j=1}^{p_i} d_H(w,u_j)
 \leq \frac{1}{2}p_i d_H(w,v_i) 
    + \frac{1}{2}p_i d_H(w,v_{i+1})
  + \frac{1}{4}(p_i+1)^2.\]  
Note that $\frac{1}{4}p_i^2 + \frac{1}{2}p_i < \frac{1}{4}(p_i+1)^2$. 
Summation over all $i \in \{0,1,\ldots,k-1\}$ yields
\begin{equation} \label{eq:sigma(w)-vs-sigma_c(w)}
\sigma(w,G) \leq \sigma(w,H) \leq \sigma_c(w,H) + \sum^{k-1}_{i=0} \left\lfloor\dfrac{1}{4}(p_i+1)^2 \right\rfloor. 
\end{equation}
For $x \in \mathbb{R}$ denote 
$\left\lfloor \frac{(x+1)^2}{4} \right\rfloor$ by $f(x)$. Since the weight of the vertices of $G$ is concentrated entirely in $F$, we have $\sigma_c(w,H) = \sigma_c(w,F)$, and thus 
\[ \sigma(w,G) \leq \sigma_c(w,F) 
     + \sum_{i=0}^{k-1} f(p_i). \]
Let $w$ be a $c$-median vertex of $F$. It suffices to show that 

\begin{equation} \label{eq:proximity of w in F}
\sigma(w,G) \leq  \frac{n^2+4n + k^2 - 4k +4}{8}.
\end{equation} 
{\sc Case 1:} $p_i \leq 
\lfloor \frac{n-k}{2} \rfloor $ 
for all $i \in \{0,1,\ldots,k-1\}$.\\[1mm]
Recall that $F$ is a cycle of length $k$, and the total weight of the vertices in $F$ is $n$. 
By Corollary \ref{coro:c-median-vertex-of-a-weighted-cycle} we have 
\begin{equation} \label{eq:bound-on-proximity-for-weighted-cycle- F}
\sigma_c(w, F) \leq
\begin{cases} 
\dfrac{kn}{4}  & \text{if } k \text{ is even}, \\[10pt]
\dfrac{(k^2-1)n}{4k}  & \text{if } k \text{ is odd}.
\end{cases}
\end{equation}
We now bound the transmission of $w$ in $G$. By combining \eqref{eq:sigma(w)-vs-sigma_c(w)} and 
\eqref{eq:bound-on-proximity-for-weighted-cycle- F} we obtain
\begin{equation} \label{eq:sigma(v_0)}
\sigma(w, G) \leq
\begin{cases} 
\dfrac{kn}{4} + \sum^{k-1}_{i=0} f(p_i) & \text{if } k \text{ is even}, \\[10pt]
\dfrac{(k^2-1)n}{4k} + \sum^{k-1}_{i=0} f(p_i) & \text{if } k \text{ is odd}.
\end{cases}
\end{equation}
Recall that $\sum_{i=0}^{k-1} p_i = n-k$, $p_i \in \mathbb{N}\cup \{0\}$ and 
$0 \leq p_i \leq \lfloor \frac{n-k}{2} \rfloor$ for all $i \in \{0,1,\ldots,k-1\}$. 
In order to maximise $\sum_{i=0}^{k-1}f(p_i)$ subject to these conditions,
we first observe the following inequality, which is easy to verify: If $a,b \in \mathbb{N}$ 
with $a \geq b \geq 1$ we have 
\begin{equation} \label{eq:maximising-sum-of-f(pi)} 
[ f(a+1) + f(b-1) ] - [ f(a) + f(b) ] 
    = \left\lceil \frac{a}{2} \right\rceil 
      - \left\lceil \frac{b-1}{2} \right\rceil
    \geq 0. 
\end{equation}    
Repeated application of \eqref{eq:maximising-sum-of-f(pi)} yields that $\sum f(p_i)$ is maximised subject to the above conditions if, in the case that $n-k$ is even,  
two of the $p_i$ equal $\frac{n-k}{2}$ and the remaining $p_i$ equal $0$ (if $n-k$ is even), and in the case that $n-k$ is odd, two of the $p_i$ equal $\frac{n-k-1}{2}$, one $p_i$ equals $1$, and the remaining $p_i$ equal $0$.
We thus obtain that
\[ \sum_{i-0}^{k-1} f(p_i)  \leq 
   \left\{ \begin{array}{cc}
  2 \lfloor \frac{1}{4}(\frac{n-k}{2}+1)^2 \rfloor & 
  \textrm{if $n-k$ is even,} \\
 2 \lfloor \frac{1}{4}(\frac{n-k-1}{2}+1)^2 \rfloor + 1 & 
  \textrm{if $n-k$ is odd,} 
  \end{array} \right\} 
\leq \frac{(n-k+2)^2}{8}. \]
In total we obtain for even $k$ that
\[ \sigma(w,G) \leq \frac{nk}{4} 
 + \frac{(n-k+2)^2}{8}= 
\frac{n^2+4n + k^2 - 4k +4}{8},
\]
and for odd $k$, 
\begin{align*}
\sigma(w,G) \leq \frac{(k^2-1)n}{4k} 
 + \frac{(n-k+2)^2}{8}=&\; 
\frac{n^2+4n + k^2 - 4k +4}{8}-\frac{n}{4k}\\
<&\;\frac{n^2+4n + k^2 - 4k +4}{8}.
\end{align*}
Thus (\ref{eq:proximity of w in F}) follows in Case 1.\\[1mm]
{\sc Case 2:} $p_i \geq \frac{n-k+1}{2}$ for some $i \in \{0,1,\ldots,k-1\}$. \\
We may assume without loss of generality that $p_0 \geq p_i$ for all $i \in \{1,2,\ldots,k-1\}$, 
and so $p_0 \geq \frac{n-k+1}{2}$.  Let $w=v_0$. 
We now bound $\sigma(w,G)$ in terms of $n$, $k$ and $p_0$. 
First consider $\sigma_c(w,F)$. 
Define $c'(v_i)=c(v_i)-1$ for all $i \in \{0,1,\ldots,k-1\}$. Then 
$\sigma_c(w,F) = \sigma(w,F) + \sigma_{c'}(w,F) = \left\lfloor \frac{k^2}{4} \right\rfloor + \sigma_{c'}(w,F)$. 
Clearly, $c'(w), c'(v_1) \geq \frac{p_0}{2}$, $c'(v_i) \geq 0$ for $i=2,3,\ldots,k-1$, and 
$\sum_{i=2}^{k-1} c'(v_i) = n-k-c'(w) - c'(v_1)$. Since every vertex of $F$ has distance at most 
$\lfloor \frac{k}{2} \rfloor$ from $w$, we have
\begin{align*}
\sigma_c(w, F) =&\;  \left\lfloor \frac{k^2}{4} \right\rfloor 
        +  1 \cdot c'(v_1) +\sum_{i=2}^{k-1} d(w,v_i) c'(v_i) \\
         \leq&\; \left\lfloor \frac{k^2}{4} \right\rfloor + c'(v_1) + \left\lfloor \frac{k}{2} \right\rfloor (n-k-c'(w) -c'(v_1)).
\end{align*}
Since $c'(w), c'(v_1) \geq \frac{1}{2}p_0$ we obtain 
\begin{equation} \label{eq:outerplanar-case2-1} 
\sigma_c(w, F) \leq  \left\lfloor \frac{k^2}{4} \right\rfloor + \frac{1}{2}p_0 + (n-k-p_0)  \left\lfloor \frac{k}{2} \right\rfloor. 
\end{equation}           
Now consider $\sum_{i=0}^{k-1} f(p_i)$. 
Recall that $p_i \in \mathbb{N}\cup \{0\}$ for $i=0,1,\ldots,k-1$ and $\sum_{i=0}^{k-1} p_i = n-k$. 
Assume that $p_0$ is fixed.  
Repeated application of \eqref{eq:maximising-sum-of-f(pi)} 
yields that $\sum_{i=0}^{k-1} f(p_i)$ is maximised subject to these conditions if there exists
$j\in \{1,2,\ldots,k-1\}$ for which $p_j=n-k-p_0$, and the remaining $p_i$ equal $0$. Hence
\begin{equation} \label{eq:outerplanar-case2-2}
 \sum_{i=0}^{k-1} f(p_i) \leq f(p_0) + f(n-k-p_0)
    \leq \frac{(p_0+1)^2}{4} + \frac{(n-k-p_0+1)^2}{4}. 
\end{equation}    
Combining \eqref{eq:outerplanar-case2-1} and \eqref{eq:outerplanar-case2-2}, we get
\begin{equation}
\sigma_c(w,G) \leq  \left\lfloor \frac{k^2}{4} \right\rfloor 
           + \frac{1}{2}p_0 + (n-k-p_0)  \left\lfloor \frac{k}{2} \right\rfloor
           + \frac{(p_0+1)^2}{4} + \frac{(n-k-p_0+1)^2}{4}.
\end{equation}
The derivative of the right hand side with respect to $p_0$ equals 
$\frac{1}{2}k - \lfloor \frac{1}{2}k \rfloor - \frac{1}{2}n + p_0 + \frac{1}{2}$, which by 
$p_0 \leq \lfloor \frac{n-2}{2} \rfloor$ is nonpositive, hence the right hand side is 
decreasing in $p_0$. Substituting $p_0=\frac{n-k+1}{2}$ yields, after simplification, 
\[ \sigma_c(w,G) \leq \frac{1}{8}(n^2 + 2n + k^2 - 6k+ \frac{1}{2}) < \frac{n^2+4n + k^2 - 4k +4}{8},\]
and so (\ref{eq:proximity of w in F}) follows in Case 2.

The derivative of the right-hand side of inequality~\eqref{eq:proximity of w in F} with respect to 
$k$ is given by $\frac{2k - 4}{8}$, which is nonnegative since $k \geq 3$. Thus, we conclude that 
the right-hand side of inequality~\eqref{eq:proximity of w in F} is increasing in $k$. Substituting 
$k \leq q$ and dividing by $n - 1$ in inequality~\eqref{eq:proximity of w in F} yields the bound 
stated in the theorem, which completes the proof.
\end{proof}

We now show that the bound obtained in Theorem \ref{theo:proximity in 2-connected outerplana graph} is best possible apart from the value of the additive constant. We first consider the case that 
$q\geq 4$. 

\begin{example}{\rm
Given an integer $q \geq 4$. For every $n \in \mathbb{N}$ with $n\geq q$ and $n-q$ a multiple of $4$, 
we construct a $2$-connected outerplanar graph $H_{n,q}$ of order $n$ with maximal internal 
face $q$.  

First let $q$ be even. Let $P:a_0, a_1,\ldots,a_{\frac{n}{2}-1}$ and 
$Q:b_0, b_1,\ldots,b_{\frac{n}{2}-1}$ two paths on $\frac{n}{2}$ vertices each. Let $H_{n,q}$ be the graph obtained from the disjoint union of $P$ and $Q$ by adding the edge $a_i b_i$ for all 
$i \in \{0,1,\ldots,\frac{n-q}{4}\} \cup \{\frac{n+q}{4}-1, \frac{n+q}{4}, \ldots, 
   \frac{n}{2}-1\}$. 
Clearly, $H_{n,q}$ has order $n$, is $2$-connected and outerplanar, and the face containing the vertices $a_i$ and $b_i$ for 
$i=\frac{n-q}{4}, \frac{n-q}{4}+1,\ldots, \frac{n+q}{4}-1$ has length $q$, while all other faces have length $4$. 

It is easy to verify that the median vertices of $H_{n,q}$ are exactly the vertices on the 
boundary of the face of length $q$. Let $u$ be such a vertex. We may assume that $u$ is on $P$. Clearly, 
$d(u,a_i) + d(u,a_{\frac{n}{2}-1-i}) = d(a_i,a_{\frac{n}{2}-1-i})  = \frac{n}{2}-1-2i$ 	and 
$d(u,b_i) + d(u,b_{\frac{n}{2}-1-i}) = d(b_i,b_{\frac{n}{2}-1-i}) +2  = \frac{n}{2}+1-2i$ for 
$i \in \{0,1,\ldots, \frac{n-q}{4}-1\}$. 
On the boundery of the face of length $q$, vertex $u$ has two vertices at distance $i$ for 
$i=1,2,\ldots, \frac{q-1}{2}$. Hence we obtain 
\[  \sigma(u,H_{n,q}) = \sum_{i=0}^{\frac{n-q}{4}-1} \left( \frac{n}{2}-1-2i \right)  
              + \sum_{i=0}^{\frac{n-q}{4}-1} \left( \frac{n}{2}+1-2i \right)  
              + 2 \sum_{i=1}^{\frac{q-1}{2}} i, \]
which, after simplification and division by $n-1$, yields 
\[ \pi(H_{n,q}) =  \overline{\sigma}(u,H_{n,q}) 
  =  \dfrac{n+5}{8}+\dfrac{q^2-4q+5}{8(n-1)}.\]
Now let $q$ be odd. $P:a_0, a_1,\ldots,a_{\frac{n-1}{2}-1}$ and 
$Q:b_0, b_1,\ldots,b_{\frac{n+1}{2}-1}$ two paths on $\frac{n-1}{2}$ and $\frac{n+1}{2}$ vertices,
respectively.  Let $H_{n,q}$ be the graph obtained from the disjoint union of $P$ and $Q$ by 
adding the edge $a_i b_i$ for all $i \in \{0,1,\ldots,\frac{n-q}{4}\}$ 
and the edge $a_ib_{i+1}$ for all $i \in \{\frac{n+q-2}{4}-1, \frac{n+q-2}{4}-2, \ldots, 
   \frac{n-1}{2}-1\}$. 
Again, $H_{n,q}$ has order $n$, is $2$-connected and outerplanar, and has one face of length $q$, 
while all other faces have length $4$. Calculations similar to those for even $q$ yield that
\[ \pi(H_{n,q}) =  \dfrac{n+3}{8}+\dfrac{q^2-2q-1}{8(n-1)}.\]   
and so $\pi(H_{n,q})$ (for both even and odd value(s) of $q$) differs from the bound in 
Theorem \ref{theo:proximity in 2-connected outerplana graph}  by less than $\frac{1}{4}$. 
Hence the bound obtained in Theorem \ref{theo:proximity in 2-connected outerplana graph} is sharp apart from an additive constant.
}
\end{example}

In maximal outerplanar graphs, every face has length $3$. Hence we obtain 
from Theorem \ref{theo:proximity in 2-connected outerplana graph} for $q=3$
the following corollary. 

\begin{corollary}  \label{coro:proximity-in-mops}
Let $G$ be a maximal outerplanar graph of order $n$. Then 
\[ \pi(G) \leq  \frac{n+5}{8} + \frac{3}{4(n-1)}. \]
\end{corollary}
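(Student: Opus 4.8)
The plan is to obtain this bound as an immediate specialisation of Theorem~\ref{theo:proximity in 2-connected outerplana graph} to the case $q=3$. The only things that need to be checked before the theorem can be invoked are that a maximal outerplanar graph of order $n \geq 3$ is $2$-connected, and that each of its internal faces has length exactly $3$.

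First I would recall the standard structural description of maximal outerplanar graphs. For $n \geq 3$, such a graph is a triangulation of a polygon: its outer boundary is a Hamiltonian cycle, so the graph is $2$-connected, and maximality forces every bounded face to be a triangle. Consequently every internal face has length $3$, and we may take $q=3$ as an upper bound on the maximum internal face length, as required by the hypothesis of the theorem.

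With the hypotheses verified, I would substitute $q=3$ into the bound $\frac{n+5}{8} + \frac{q^2-4q+9}{8(n-1)}$. Since $q^2 - 4q + 9 = 9 - 12 + 9 = 6$ when $q=3$, the second summand becomes $\frac{6}{8(n-1)} = \frac{3}{4(n-1)}$, which produces exactly the claimed bound $\frac{n+5}{8} + \frac{3}{4(n-1)}$.

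Since this is a direct corollary, there is essentially no analytic obstacle; the only point requiring care is ensuring that the theorem's $2$-connectivity assumption is met, which the Hamiltonicity of the outer cycle guarantees for $n \geq 3$. The small case $n=2$ lies outside the $2$-connected hypothesis, so I would dispose of it separately: the only maximal outerplanar graph is $K_2$ with $\pi(K_2)=1$, which is comfortably below the stated bound. Hence the corollary follows for all admissible $n$.
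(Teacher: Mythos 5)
Your proposal is correct and matches the paper's own argument: the corollary is obtained by substituting $q=3$ into Theorem~\ref{theo:proximity in 2-connected outerplana graph}, using the fact that every internal face of a maximal outerplanar graph is a triangle, and computing $\frac{q^2-4q+9}{8(n-1)} = \frac{3}{4(n-1)}$. Your additional checks (the $2$-connectivity via the Hamiltonian outer cycle and the trivial $n=2$ case) are sound, if slightly more explicit than the paper's one-line justification.
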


The following example shows that the bound obtained in Corollary \ref{coro:proximity-in-mops}
is sharp apart from an additive constant.

\begin{example}{\rm
Let $n \in \mathbb{N}$ with $n \geq 10$ be given. 
Define $k = \lfloor \frac{n+2}{4}\rfloor$ and $k' = n-4k+4$. Then $k' \in \{2, 3, 4, 5\}$. 
We define the graph $H_{n,3}$ as follows. Let $G_0$ be the graph consisting of a single vertex $a_0$. 
For each $i \in \{1, 2, \ldots, k-2\}$ let $G_i$ be a copy of the path $P_4$ with vertices $a_i, b_i, c_i, d_i$, 
where $a_i$ and $d_i$ are the end-vertices of the path. Let $G_{k-1}$ be a copy of the path $P_3$ with vertices 
$a_{k-1}, b_{k-1}, c_{k-1}$. Furthermore, let $G_k$ be a copy of the path $P_{k'}$ with vertices 
$x^1_k, x^2_k, \ldots, x^{k'}_k$. To the disjoint union of the graphs 
$G_0, G_1, \ldots, G_k$ we add edges as follows: 
an edge between $a_0$ and each of  $a_1, b_1, c_1,$ and $d_1$ is added,  
for each $i \in \{1, 2, \ldots, k-3\}$ the edges $a_i a_{i+1}$, $b_i b_{i+1}$, $c_i c_{i+1}$ and $d_i d_{i+1}$ are added, 
for each $i \in \{1, 2, \ldots, k-2\}$ the edges $a_i b_{i+1}$ are added,  
for each $i \in \{1, 2, \ldots, k-3\}$ the edges $d_i c_{i+1}$ are added,  
the edges $a_{k-2} a_{k-1}$, $b_{k-2} b_{k-1}$, $c_{k-2} c_{k-1}$ and $d_{k-2} c_{k-1}$ are added,  
and finally, the edges $a_{k-1} x^1_k$, $a_{k-1} x^2_k$, $b_{k-1} x^2_k$, $b_{k-1} x^3_k$, \ldots, $b_{k-1} x^{k'}_k$ are added. 
Now, by removing the edges \( b_i c_i \) for \( i \in \{2, 3, \ldots, k-1\} \), we obtain the graph 
$H_{n,3}$. 
See Figure \ref{fig:construction of H} for a sketch of the graph $H_{19,3}$.

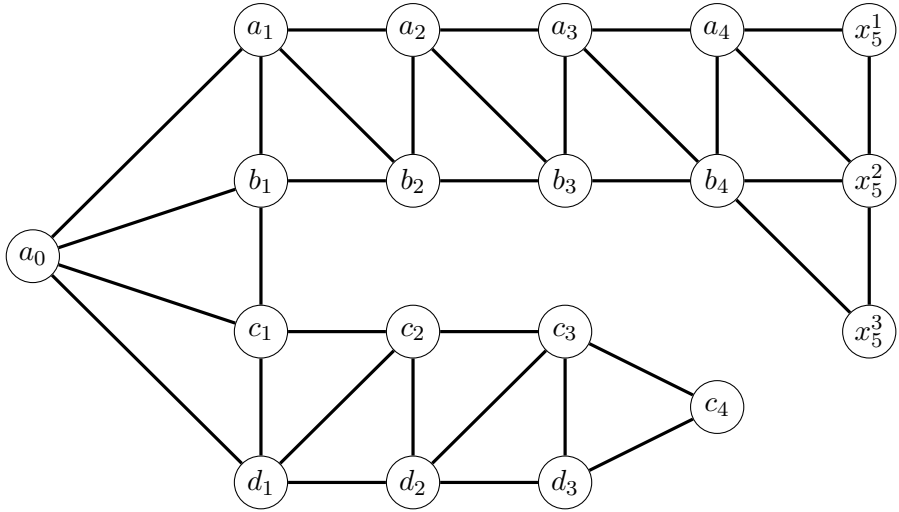
\begin{figure}[H]
	\begin{center}
\begin{tikzpicture}[every node/.style={circle, draw, minimum size=7mm, inner sep=0.5pt}]

\node (a0) at (0,3) {$a_0$};

\node (a1) at (3,6) {$a_1$};
\node (b1) at (3,4) {$b_1$};
\node (c1) at (3,2) {$c_1$};
\node (d1) at (3,0) {$d_1$};

\node (a2) at (5,6) {$a_2$};
\node (b2) at (5,4) {$b_2$};
\node (c2) at (5,2) {$c_2$};
\node (d2) at (5,0) {$d_2$};

\node (a3) at (7,6) {$a_3$};
\node (b3) at (7,4) {$b_3$};
\node (c3) at (7,2) {$c_3$};
\node (d3) at (7,0) {$d_3$};

\node (a4) at (9,6) {$a_4$};
\node (b4) at (9,4) {$b_4$};
\node (c4) at (9,1) {$c_4$};

\node (x1) at (11,6) {$x^1_5$};
\node (x2) at (11,4) {$x^2_5$};
\node (x3) at (11,2) {$x^3_5$};

\draw[very thick, black] (a0)--(a1)--(a2)--(a3)--(a4)--(x1);

\draw[very thick, black] (a0)--(b1)--(b2)--(b3)--(b4)--(x2);

\draw[very thick, black] (a0)--(c1)--(c2)--(c3)--(c4);

\draw[very thick, black] (a0)--(d1)--(d2)--(d3)--(c4);

\draw[very thick, black] (b4)--(x3);

\draw[very thick, black] (x1)--(x2)--(x3);

\draw[very thick, black] (a1)--(b1)--(c1)--(d1);

\draw[very thick, black] (a2)--(b2);
\draw[very thick, black] (a3)--(b3);
\draw[very thick, black] (a4)--(b4);

\draw[very thick, black] (c2)--(d2);
\draw[very thick, black] (c3)--(d3);

\draw[very thick, black] (a1)--(b2);
\draw[very thick, black] (a2)--(b3);
\draw[very thick, black] (a3)--(b4);
\draw[very thick, black] (a4)--(x2);

\draw[very thick, black] (d1)--(c2);
\draw[very thick, black] (d2)--(c3);

\end{tikzpicture}
\caption{The maximal outerplanar graph $H_{19,3}$ with $k=5$ and $k'=3$.}
	\label{fig:construction of H}
	\end{center}
	\end{figure}

Clearly, $H_{n,3}$ is a maximal outerplanar graph of order $n$. 
A tedious but straightforward calculation shows that $a_0$ is a median vertex of $H_{n,3}$,
and that

\[\pi(H_{n,3}) \; = \; \left\{
 	\begin{array}{ll}
 	\dfrac{n+5}{8}+\dfrac{11}{8(n-1)} \quad \qquad\mbox{if n $ \equiv 0 \pmod 4$,}\\
 	\\
 	\dfrac{n+5}{8}+\dfrac{1}{n-1} \qquad\qquad \mbox{if n $ \equiv 1 \pmod 4$,}\\
 	\\
 	\dfrac{n+5}{8}+\dfrac{11}{8(n-1)}  \quad\qquad\mbox{if n $ \equiv  2 \pmod 4$,}\\
 	\\
 	\dfrac{n+5}{8}+\dfrac{3}{2(n-1)} \quad\qquad \mbox{if n  $ \equiv  3 \pmod 4$.}\\
 	\end{array}
 	\right.\]
    }
\end{example}

The value $\pi(H_{n,3})$ differs from the bound in Corollary \ref{coro:proximity-in-mops} by a term
that approaches $0$ as $n$ gets large. Hence Corollary \ref{coro:proximity-in-mops} is close 
to being sharp. 

We note that in \cite{Mal-PhD} it was shown that the graph $H_{n,3}$ has maximum proximity among all
maximal outerplanar graphs of order $n$. \\

For completeness, we derive an upper bound on the remoteness of 
$2$-connected outerplanar graphs of given order

\begin{proposition}
Let $G$ be a $2$-connected outerplanar graph of order $n$. Then 
\[\rho(G) \leq \left\{  \begin{array}{cc}
\frac{n+1}{4} + \frac{1}{4(n-1)} & \textrm{if $n$ is even,} \\ 
\frac{n+1}{4}  & \textrm{if $n$ is odd,} 
     \end{array} \right.   \]
and this bound is sharp, even for maximal outerplanar graphs. 
\end{proposition}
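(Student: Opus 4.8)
The plan is to exploit the fact that every $2$-connected outerplanar graph contains a spanning cycle. By Theorem~\ref{theo:Outerplanar-unique-cycle}, $G$ has a (unique) Hamiltonian cycle $C$. Since $C$ is a spanning subgraph of $G$, every distance in $G$ is at most the corresponding distance in $C$, and hence $\overline{\sigma}(v,G) \le \overline{\sigma}(v,C)$ for every vertex $v$. Because $C \cong C_n$ is vertex-transitive, all of its vertices have the same average distance, which I would obtain by a direct computation of the cycle transmission: any vertex of $C_n$ has transmission $\tfrac{n^2}{4}$ when $n$ is even and $\tfrac{n^2-1}{4}$ when $n$ is odd, so that $\overline{\sigma}(v,C)=\tfrac{n^2}{4(n-1)}=\tfrac{n+1}{4}+\tfrac{1}{4(n-1)}$ for even $n$ and $\overline{\sigma}(v,C)=\tfrac{n+1}{4}$ for odd $n$. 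Taking the maximum over all $v$ then yields $\rho(G)=\max_v \overline{\sigma}(v,G)\le \max_v \overline{\sigma}(v,C)$, which is exactly the claimed bound.

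For sharpness I would exhibit a maximal outerplanar graph attaining the bound with equality. The key observation is that although passing from $C$ to a triangulation can only shorten distances, one can triangulate so that no chord shortens any distance measured from a single distinguished vertex. Fix the Hamiltonian cycle $v_1 v_2 \cdots v_n v_1$ and assign to each vertex the \emph{level} $\ell(v_j)=d_C(v_1,v_j)=\min(j-1,\,n+1-j)$. I would take $G^*$ to be the zigzag (snake) triangulation whose weak dual is a path, chosen so that every chord $v_iv_j$ joins two vertices with $|\ell(v_i)-\ell(v_j)|\le 1$; such a triangulation arises by repeatedly cutting off a triangle that advances alternately along the two arcs emanating from $v_1$. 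Since every cycle edge also changes the level by at most $1$ (including near the antipode of $v_1$), every edge of $G^*$ changes the level by at most $1$, whence $d_{G^*}(v_1,v_j)\ge \ell(v_j)$, while the shorter cycle arc already gives $d_{G^*}(v_1,v_j)\le \ell(v_j)$. Thus all distances from $v_1$ are preserved, so $\overline{\sigma}(v_1,G^*)=\overline{\sigma}(v_1,C)$, and by the upper bound this forces $\rho(G^*)=\overline{\sigma}(v_1,C)$, matching the bound exactly for every $n\ge 3$.

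The upper bound is essentially immediate once the Hamiltonian cycle is invoked, so the real work lies in the sharpness half, and this is where I expect the main obstacle. The delicate points are making the zigzag construction precise, namely specifying the chords explicitly, verifying that $G^*$ is genuinely a full triangulation of the $n$-gon (hence maximal outerplanar), and checking the level condition $|\ell(v_i)-\ell(v_j)|\le 1$ for every chord, with the boundary cases near the antipode of $v_1$ (where the parity of $n$ enters) treated separately. An alternative that sidesteps the level argument is to define $G^*$ as a concrete triangular strip and compute $\sigma(v_1,G^*)$ directly by summing over the two arcs from $v_1$; this trades the structural argument for a short but careful summation, and is the route I would take if the level bookkeeping becomes cumbersome.
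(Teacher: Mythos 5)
Your proposal is correct and takes essentially the same route as the paper: the upper bound is obtained identically, by passing to the unique Hamiltonian cycle guaranteed by Theorem~\ref{theo:Outerplanar-unique-cycle} and computing $\rho(C_n)$. Your zigzag (snake) triangulation is the same extremal family as the paper's graph $G_n$ (the triangulated strip with distinguished end-vertex $a_0$, whose chords $a_ib_i$, $a_ib_{i+1}$ join vertices whose cycle-distances from $a_0$ differ by at most one), the only difference being that you certify $\overline{\sigma}(v_1,G^*)=\overline{\sigma}(v_1,C)$ by a level argument where the paper simply verifies the transmission of $a_0$ directly.
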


\begin{proof}
Let $G$ be a $2$-connected outerplanar graph. 
By Theorem \ref{theo:Outerplanar-unique-cycle}, $G$ has a Hamiltonian cycle, i.e., a 
spanning subgraph isomorphic to $C_n$ . Hence 
$\rho(G) \leq \rho(C_n)$. A straightforward evaluation of $C_n$ shows that 
$\rho(C_n)$ equals $\frac{n+1}{4} + \frac{1}{4(n-1)}$ if $n$ is even,
and  $\frac{n+1}{4}$ if $n$ is odd. 

To see that the bound is sharp for every $n$, even for maximal outerplanar graphs,
consider the following graph. Given $n\in \mathbb{N}$ with $n\geq 3$. 
Let $G_n$ be the graph obtained from two paths
$a_0, a_1,\ldots,a_{\lfloor (n-1)/2 \rfloor}$ and 
$b_1, b_2,\ldots,b_{\lceil (n-1)/2 \rceil}$
by adding the edges $a_ib_j$ whenever $j \in \{i,i+1\}$. 
It is easy to verify that $G_n$ is a maximal outerplanar graph of order $n$
and that $\overline{\sigma}(a_0)$ equals $\frac{n+1}{4} + \frac{1}{4(n-1)}$ if $n$ is even,
and  $\frac{n+1}{4}$ if $n$ is odd. Hence $G_n$ attains the bound.
\end{proof}

\section{Radius in $2$-connected outerplanar graphs}
\label{section:radius}

In this section we present an upper bound on the radius of a $2$-connected outerplanar graphs with 
bounded face lengths. For maximal outerplanar graphs,  a sharp bound on 
radius is easily obtained from a result by Chang and Nemhauser \cite{ChaNem1984}
relating the radius and the diameter of a connected chordal graph (see below).  
Recall that a graph is said to be chordal if every induced cycle has length three.

\begin{theorem}{\rm (\cite{ChaNem1984})}\label{theoremcn1984}
Let $G$ be a connected chordal graph. Then 
\[2{\rm rad}(G)-2\leq {\rm diam}(G)\leq 2{\rm rad}(G).\]
\end{theorem}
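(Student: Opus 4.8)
The plan is to prove the two inequalities separately, observing that only the lower bound actually uses chordality. The upper bound ${\rm diam}(G)\le 2\,{\rm rad}(G)$ holds for every connected graph: if $c$ is a central vertex, so that ${\rm ecc}(c,G)={\rm rad}(G)$, then for any two vertices $x,y$ the triangle inequality gives $d(x,y)\le d(x,c)+d(c,y)\le 2\,{\rm rad}(G)$, and maximising over $x,y$ yields the claim. Hence the substance lies in the lower bound, which I would restate in the equivalent form ${\rm rad}(G)\le \lfloor\tfrac12{\rm diam}(G)\rfloor+1$. This suffices, since it gives $2\,{\rm rad}(G)-2\le 2\lfloor\tfrac12{\rm diam}(G)\rfloor\le{\rm diam}(G)$.

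To prove ${\rm rad}(G)\le\lfloor\tfrac12{\rm diam}(G)\rfloor+1$, I would fix a diametral pair $u,v$ with $d(u,v)=d:={\rm diam}(G)$ (the cases $d\le 1$ being trivial), fix a shortest $u$--$v$ path $P:u=v_0,v_1,\dots,v_d=v$, and nominate the middle vertex $c:=v_{\lfloor d/2\rfloor}$ as a near-centre. The goal then reduces to the single inequality ${\rm ecc}(c,G)\le\lfloor d/2\rfloor+1$, i.e. to showing $d(w,c)\le\lfloor d/2\rfloor+1$ for every vertex $w$. Writing $a=d(u,w)$ and $b=d(v,w)$, I would use that $a,b\le d$ (as $d$ is the diameter) together with the idea that, since $w$ is closer to one endpoint of $P$, a shortest path from $w$ need only be pushed slightly past the corresponding ``internal point'' of $P$ in order to reach its middle $c$.

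The crux, and the only place chordality enters, is a metric lemma asserting that geodesic triangles in a chordal graph are \emph{thin} with an absolute constant (which should be $1$): for the geodesic triangle with sides $P$ and shortest $w$--$u$, $w$--$v$ paths, each side lies within distance $1$ of the union of the other two. I would prove this from the defining property that $G$ has no induced cycle of length at least $4$: if some vertex of $P$ were far from both other sides, the three geodesics would close up into a cycle admitting no short chords, forcing a long induced cycle and contradicting chordality. Granting thinness with constant $1$, the vertex of $P$ at distance about $\tfrac12(d+a-b)$ from $u$ lies within $\tfrac12(a+b-d)+1$ of $w$, and a short computation comparing this index with $\lfloor d/2\rfloor$ and invoking $a,b\le d$ bounds $d(w,c)$ by $\lfloor d/2\rfloor+1$.

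The main obstacle I anticipate is pinning the thin-triangle property down with \emph{exactly} the right additive constant and reconciling it with the integrality of distances: a naive application loses a unit in the parity case $d$ odd with $a\neq b$, so the estimate is delicate precisely where the theorem is tight. Handling this is likely to require either selecting $c$ among the two central vertices of $P$ according to the sign of $a-b$ in the estimate, or replacing the single middle vertex by the middle minimal $u$--$v$ separator of $G$, which in a chordal graph is a clique and therefore contributes at most $1$ to all the distances involved. I would keep the separator formulation as a fallback, since clique separators are the most natural chordal tool and make the corner-cutting rigorous without invoking graph hyperbolicity as a black box.
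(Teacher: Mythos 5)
First, a remark on context: the paper does not prove this theorem at all --- it is quoted from Chang and Nemhauser \cite{ChaNem1984} --- so your argument has to stand entirely on its own. Your proof of the upper bound ${\rm diam}(G)\le 2\,{\rm rad}(G)$ is correct, and so is the reduction of the lower bound to ${\rm rad}(G)\le\lfloor\frac{1}{2}{\rm diam}(G)\rfloor+1$. The problems begin at what you yourself call the crux. The thin-triangle lemma with additive constant exactly $1$ is not proved: the union of three geodesics is in general not a cycle, let alone an induced one (it can have repeated vertices and many chords joining different sides), so ``the three geodesics would close up into a cycle admitting no short chords'' is a hope, not an argument. What is known sharply in this direction is that chordal graphs satisfy Gromov's four-point condition with constant $1$ (Brinkmann, Koolen and Moulton), and translating four-point hyperbolicity into thinness of triangles costs additive constants; since the theorem is tight exactly where such constants matter, proving the lemma with its exact constant is most of the work, and the proposal leaves all of it undone.

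Second, and decisively, the inequality to which you reduce the theorem --- ${\rm ecc}(c,G)\le\lfloor d/2\rfloor+1$ for $c=v_{\lfloor d/2\rfloor}$, the nominated middle vertex of a diametral geodesic --- is \emph{false} for chordal graphs, so no refinement of the thinness lemma can rescue the plan in its stated form. Consider the maximal outerplanar (hence chordal) graph on the seven vertices $u,c,c',v,w,y,x$ with outer cycle $u,c,c',v,w,y,x$ (in this cyclic order) and chords $cx$, $c'x$, $c'y$, $vy$. One checks that $d(u,v)=d(u,w)=d(c,w)=3$ while all other distances are at most $2$, so ${\rm diam}(G)=3$ and $P:u,c,c',v$ is a diametral geodesic; your rule picks $v_{\lfloor 3/2\rfloor}=c$, whose eccentricity is $d(c,w)=3>\lfloor 3/2\rfloor+1=2$ (the vertices of eccentricity $2$ are $c'$, $x$ and $y$). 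This shows that the parity issue you flag is not a removable technicality: for odd $d$ the assertion is ${\rm rad}(G)\le\lceil d/2\rceil$ with no additive slack at all, so any argument that spends a ``$+1$'' (a thinness constant, or the diameter of a clique) must fail there. Your first fallback --- choosing between the two central vertices of $P$ according to the sign of $a-b$ --- is not legitimate, because the centre must be a single vertex serving all targets $w$ simultaneously, and the example shows the choice genuinely matters; your second fallback, clique minimal separators, is the right kind of tool (it is what proofs in the literature use) but is left as a one-sentence wish rather than an argument. As it stands, the proposal has a genuine gap located exactly at the substance of the theorem.
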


\begin{corollary}
Let $G$ be a maximal outerplanar graph of order $n$. Then
\[{\rm rad}(G) \leq \big\lfloor \dfrac{n}{4} \big\rfloor +1.\]
\end{corollary}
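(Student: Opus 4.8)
The plan is to combine two facts: that maximal outerplanar graphs are chordal, which makes the Chang--Nemhauser inequality of Theorem \ref{theoremcn1984} applicable, and a crude upper bound on the diameter coming from the Hamiltonian cycle supplied by Theorem \ref{theo:Outerplanar-unique-cycle}. The radius bound then drops out after a little care with floor functions.

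First I would record that a maximal outerplanar graph $G$ is chordal. As $G$ triangulates the region bounded by its outer Hamiltonian cycle, any cycle of length at least $4$ bounds a subregion that is itself triangulated and therefore has a chord; hence every induced cycle has length $3$, as required by Theorem \ref{theoremcn1984}. Next I would bound ${\rm diam}(G)$. By Theorem \ref{theo:Outerplanar-unique-cycle}, $G$ contains a spanning cycle $C_n$, and since adding the remaining edges of $G$ cannot increase distances, $d_G(u,v) \le d_{C_n}(u,v)$ for all $u,v$. Therefore ${\rm diam}(G) \le {\rm diam}(C_n) = \lfloor n/2\rfloor$.

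Finally I would combine the two estimates. The left-hand inequality of Theorem \ref{theoremcn1984} gives $2\,{\rm rad}(G) - 2 \le {\rm diam}(G) \le \lfloor n/2\rfloor$, so ${\rm rad}(G) \le 1 + \tfrac12\lfloor n/2\rfloor$. Because the radius is an integer, this can be sharpened to ${\rm rad}(G) \le 1 + \big\lfloor \tfrac12\lfloor n/2\rfloor\big\rfloor = 1 + \lfloor n/4\rfloor$, using the nested-floor identity $\lfloor \lfloor n/2\rfloor/2\rfloor = \lfloor n/4\rfloor$.

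I expect no conceptual obstacle here; the argument is essentially a two-line deduction from the stated theorems. The only point demanding attention is the final rounding: the raw bound $1 + \tfrac12\lfloor n/2\rfloor$ is not an integer when $n \equiv 2,3 \pmod 4$, so one must explicitly invoke the integrality of ${\rm rad}(G)$ together with the floor identity to reach exactly $\lfloor n/4\rfloor + 1$ rather than a half-integer overestimate. Verifying the chordality claim rigorously and checking the floor arithmetic across the residues of $n \bmod 4$ is therefore the main bookkeeping.
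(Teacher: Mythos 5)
Your proposal is correct and follows essentially the same route as the paper: maximal outerplanar implies chordal, apply the Chang--Nemhauser inequality ${\rm rad}(G) \leq \frac{1}{2}{\rm diam}(G)+1$, and bound the diameter by $n/2$ (the paper cites $2$-connectivity, you cite the Hamiltonian cycle, which is the same fact in this setting). Your explicit handling of the integrality and the nested-floor identity is slightly more careful than the paper's, which passes from ${\rm rad}(G) \leq n/4 + 1$ to the floored statement implicitly.
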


\begin{proof}
Let $G$ be a maximal outerplanar graph. Then $G$ is chordal. Hence we have 
${\rm rad}(G) \leq \frac{1}{2}{\rm diam}(G) +1$ by Theorem \ref{theoremcn1984}. 
Also, since $G$ is $2$-connected, it follows that ${\rm diam}(G)\leq \frac{n}{2}$, 
and so  ${\rm rad}(G) \leq \dfrac{n}{4}+1$, as desired. 
\end{proof}

We will now show that this bound on the radius holds not only for maximal outerplanar graph, but holds in general for any $2$-connected outerplanar graphs in which the maximum length of an internal face does not exceed $\frac{n+2}{4}$.

\begin{theorem}\label{theo:rad in Outerpanar}
Let $G$ be a $2$-connected outerplane graph of order $n$ whose faces have length at most 
$\frac{n+2}{4}$. 
Then
\[ {\rm rad}(G) \leq \left\lfloor \frac{n}{4} \right\rfloor + 1. \]
\end{theorem}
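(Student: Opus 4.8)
The plan is to reuse verbatim the geometric setup from the proof of Theorem~\ref{theo:proximity in 2-connected outerplana graph}. First I would apply Lemma~\ref{lemm:face-component-has-at-most-n-2/2 vertices} to fix a face $F$ of length $k$ such that every component of $G-V(F)$ has at most $\frac{n-2}{2}$ vertices. As there, the unique Hamiltonian cycle (Theorem~\ref{theo:Outerplanar-unique-cycle}) meets $F$ in the cyclic order $v_0,\dots,v_{k-1}$ and is cut into segments $P_0,\dots,P_{k-1}$, where $P_i$ has $p_i$ internal vertices, $\sum_i p_i=n-k$, and each $p_i\le\frac{n-2}{2}$; the components of $G-V(F)$ are exactly the interiors of the $P_i$. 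Passing to the spanning subgraph $H$ (the Hamiltonian cycle together with $E(F)$) only increases distances, so it suffices to bound ${\rm ecc}(w,H)$ for a well-chosen $w$. The key local estimate, obtained exactly as the displayed distance computation in Theorem~\ref{theo:proximity in 2-connected outerplana graph}, is that from any $w\in V(F)$ the farthest vertex of the petal $P_i$ lies at distance at most $d_F(w,\{v_i,v_{i+1}\})+\delta_i$, where $\delta_i:=\lfloor p_i/2\rfloor+1$ (and $\delta_i=0$ when $p_i=0$). Since $p_i\le\frac{n-2}{2}$ and $k\le\frac{n+2}{4}$, at least two petals are nonempty.

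The center is chosen relative to the two deepest petals. Let $a,b$ satisfy $\delta_a\ge\delta_b\ge\delta_i$ (equivalently $p_a\ge p_b\ge p_i$) for all remaining $i$, and let $s$ be the distance in $F$ between the edges carrying $P_a$ and $P_b$. I would place $w$ on a shortest $F$-arc joining these two edges, at the vertex balancing $d_F(w,\text{edge }a)+\delta_a$ against $d_F(w,\text{edge }b)+\delta_b$. Since $d_F(w,\text{edge }a)+d_F(w,\text{edge }b)=s$ for such $w$, the best choice makes the larger of the two at most $\max\!\big(\delta_a,\ \lceil\tfrac{s+\delta_a+\delta_b}{2}\rceil\big)$. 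Using the sharp bound $s\le\lfloor k/2\rfloor-1$, the identity $\delta_i=\lfloor p_i/2\rfloor+1$, the inequality $p_a+p_b\le n-k$, and $\lfloor k/2\rfloor+\lfloor(n-k)/2\rfloor\le\lfloor n/2\rfloor$, a short floor computation gives $\lceil\tfrac{s+\delta_a+\delta_b}{2}\rceil\le\lfloor n/4\rfloor+1$ in every residue class of $n$, while $\delta_a\le\lfloor\frac{n-2}{4}\rfloor+1\le\lfloor n/4\rfloor+1$. Hence both deepest petals are reached within the claimed radius.

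It then remains to reach every other petal and every vertex of $F$ from $w$. A vertex of $F$ is within $\lfloor k/2\rfloor\le\lfloor\frac{n+2}{8}\rfloor$ of $w$, which is harmless. For a petal $P_c$ with $c\notin\{a,b\}$ we have $p_c\le\frac{n-k}{3}$, since $p_a,p_b\ge p_c$ forces $3p_c\le p_a+p_b+p_c\le n-k$; thus its farthest vertex lies at distance at most $\lfloor k/2\rfloor+\lfloor p_c/2\rfloor+1\le\lfloor\tfrac{n+2k}{6}\rfloor+1$, and here the hypothesis $k\le\frac{n+2}{4}$ is exactly what forces $\lfloor\tfrac{n+2k}{6}\rfloor\le\lfloor n/4\rfloor$. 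Combining the three estimates gives ${\rm ecc}(w,G)\le{\rm ecc}(w,H)\le\lfloor n/4\rfloor+1$, as required.

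The hard part is not the strategy but forcing the constants down to the exact value $\lfloor n/4\rfloor+1$ rather than $\lfloor n/4\rfloor+2$. This is delicate in two places: one must use the sharp bound $s\le\lfloor k/2\rfloor-1$ on the distance between two edges of a $k$-cycle (the naive $\lfloor k/2\rfloor$ loses the last unit and breaks the cases $n\equiv2,3\pmod4$), and one must track the floors in $\delta_i=\lfloor p_i/2\rfloor+1$ carefully; crucially, a third deep petal cannot be simultaneously large and far from $w$, which is precisely where the face-length hypothesis is consumed. A secondary point is the degenerate regime of few petals (where $b$ or $c$ need not exist), but this is immediate from the same estimates.
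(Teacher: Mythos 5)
Your proposal is correct and takes essentially the same route as the paper's proof: the same face-plus-petal decomposition via Lemma~\ref{lemm:face-component-has-at-most-n-2/2 vertices} and the Hamiltonian cycle, the same depth estimate $\lfloor p_i/2\rfloor+1$ coming from the cycle formed by $P_i$ and the edge $v_iv_{i+1}$, a center on $F$ chosen to balance the two deepest petals, and the face-length hypothesis consumed exactly where you say (on the remaining petals and on $V(F)$). The differences are only in execution --- the paper fixes the center as $v_{\min(\ell,j)}$ with $\ell=\lfloor\frac{n+4}{4}\rfloor-\lfloor\frac{p_0+2}{2}\rfloor+1$ rather than your min--max balance point, and it handles the remaining petals by the subcases $p_i\ge k$ and $p_i\le k-1$ instead of your $p_c\le\frac{n-k}{3}$ averaging --- and both versions of these computations go through.
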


\begin{proof}              
Let $F$, $k$, $v_i$, $P_i$ and $p_i$ be as defined in the proof of 
Theorem \ref{theo:proximity in 2-connected outerplana graph}.
As in the proof of Theorem \ref{theo:proximity in 2-connected outerplana graph} we may asssume 
that $p_0$ has the largest value 
among the $p_i$, and that $p_j$ has the largest value among the $p_i$ with $i \neq 0$. 
We may assume without loss of generality that $j \leq \frac{k}{2}$.

Since the vertices of $P_i$ together with $v_i$ and $v_{i+1}$ induce a Hamiltonian graph in 
$G$ that has order $p_i+2$,  every vertex of $P_i$ is within distance 
$\lfloor \frac{p_i+2}{2} \rfloor$ from both, $v_i$ and $v_{i+1}$. Define 
$ \ell := \lfloor \frac{n+4}{4} \rfloor - \lfloor \frac{p_0+2}{2} \rfloor+ 1$. 
Note that $\ell \geq 1$ since $p_0 \leq \frac{n-2}{4}$ by 
Lemma \ref{lemm:face-component-has-at-most-n-2/2 vertices}.
Let $u:=v_{\ell}$ if $\ell \leq j$, and $u:=v_j$ if $\ell >j$.
To prove the theorem it suffices to show that for all $x \in V(G)$, 
\begin{equation}
d_G(u, x) \leq \left\lfloor \frac{n+4}{4} \right\rfloor.
\end{equation}  
{\sc Case 1:} $x \in V(P_0)$. \\
Then $d_G(u,v_1) \leq \ell-1$ and $d_G(v_1,x) \leq \lfloor \frac{p_0+2}{2} \rfloor$. Hence
\[ d(u,x) \leq d_G(u,v_1) + d_G(v_1, x) 
\leq \left(\left\lfloor \frac{n+4}{4} \right\rfloor - \left\lfloor \frac{p_0+2}{2} \right\rfloor \right) + \left\lfloor \frac{p_0+2}{2} \right\rfloor
= \left\lfloor \frac{n+4}{4} \right\rfloor, \]
as desired.\\[1mm]
{\sc Case 2:}  $x \in V(P_j)$. \\
If $u=v_j$, then 
\[ d_G(u,x) = d_G(v_j,x) \leq \left\lfloor \frac{p_j+2}{2} \right\rfloor 
   \leq  \left\lfloor \frac{p_0+2}{2} \right\rfloor
   <  \left\lfloor \frac{n+4}{4} \right\rfloor, \]
with the last inequality holding since $p_0 \leq \frac{n-2}{2}$. 

If $u = v_{\ell}$, then, by $j \leq \lfloor \frac{k}{2}\rfloor$ and 
$d_G(v_j,x) \leq  \lfloor \frac{p_j+2}{2} \rfloor$ we have    
\[  d_G(u,x)  =  d_G(v_{\ell},v_j) + d_G(v_j,x) 
      \leq   \left\lfloor \frac{k}{2} \right\rfloor -\ell + \left\lfloor \frac{p_j+2}{2} \right\rfloor. \]
Hence
\begin{align*}
 d_G(u,x) \leq&\;  \left\lfloor \frac{k}{2} \right\rfloor -\left(\left\lfloor \frac{n+4}{4} \right\rfloor - \left\lfloor \frac{p_0+2}{2} \right\rfloor+ 1 \right)+ \left\lfloor \frac{p_j+2}{2} \right \rfloor\\
 \leq& \; \left\lfloor \frac{k+p_0 + p_j+2}{2} \right\rfloor  - \left\lfloor \frac{n+4}{4} \right\rfloor,
\end{align*}
Since $k+p_0+p_j \leq n$, we obtain
\[ d_G(u,x) \leq \left\lfloor \frac{n+2}{2} \right\rfloor - \left\lfloor \frac{n+4}{4} \right\rfloor
            \leq \left\lfloor \frac{n+4}{4} \right\rfloor, \]
as desired. \\[1mm]
{\sc Case 3:} $x \in V(P_i)$ for some $i \in \{1,\ldots,k-1\} - \{0,j\}$. \\
Since $u$ and $v_i$ are on the cycle $F$, we have $d_G(u,v_i) \leq \frac{k}{2}$. 
Moreover, $d_G(v_i,x) \leq \frac{p_i+2}{2}$. Hence 
\begin{equation} \label{eq:radius-of-outerplanar-1} 
d_G(u,x) \leq d_G(u,v_i) + d_G(v_i,x) \leq \frac{k}{2} +\frac{p_i+2}{2}. 
\end{equation}
We now consider two sub-cases. \\[1mm]
{\sc Case 3.1:} $p_i \geq k$. \\
We claim that $p_i \leq \frac{1}{2}(n-k-p_0)$. Indeed, since $k+p_0+p_j+p_i \leq n$,
we have $p_i + p_j \leq n-k-p_0$ and thus, since $p_i \leq p_j$ by the definition of $p_j$, 
the inequality $p_i \leq \frac{1}{2}(n-k-p_0)$ follows.
Substituting this into \eqref{eq:radius-of-outerplanar-1} yields 
\[ d_G(u,x) \leq \frac{n+k-p_0+4}{4}, \]
and since our asumtion $p_i \geq k$ implies that $p_0 \geq k$, the theorem follows in this case. \\[1mm]
{\sc Case 3.2:} $p_i \leq k-1$. \\
Then \eqref{eq:radius-of-outerplanar-1} in conjunction with $k \leq \frac{n+2}{4}$ yields that 
\[ d_G(u,x) \leq 2k \leq \frac{n+4}{4}, \]
and the theorem holds in this case. \\[1mm]
{\sc Case 4:} $x \in V(F)$. \\
Then $d_G(u,x) \leq \frac{k}{2}$. Since $k \leq \frac{n+2}{4}$, we have
$d_G(u,x) \leq \frac{n+2}{8} < \lfloor \frac{n}{4} \rfloor +1$, and the theorem follows.
\end{proof}

The following example shows that the bound in Theorem \ref{theo:rad in Outerpanar} is sharp 
for every $n$.

\begin{example}{\rm 
Let $n \in \mathbb{N}$ with $n\geq 4$. Define the graph $G_n$ as follows: 
 
If $n$ is even, then let $G_n$ be obtained from 
two disjoint paths $a_1,a_2,\ldots,a_{\frac{n}{2}}$ and $b_1, b_2,\ldots,b_{\frac{n}{2}}$ by adding the edges $a_1b_1, a_2b_2,\ldots,a_{\frac{n}{2}}b_{\frac{n}{2}}$.

If $n$ is odd, then let $G_n$ be obtained from $G_{n-1}$ by adding a new vertex adjacent to $a_1$ and $b_1$. 

Clearly, $G_n$ is $2$-connected and outerplanar, and every internal face has length $3$ or $4$. It is easy to verify that ${\rm rad}(G_n) = \lfloor \frac{n}{4} \rfloor +1$.}
\end{example}

We do not know if in Theorem \ref{theo:rad in Outerpanar}, the condition on the maximum
face length can be relaxed. For $n\in \mathbb{N}$ with $n\geq 3$ let $q_n$ denote the 
largest value so that the radius of every $2$-connected outerplanar graph with maximum 
interior face length not more than $q_n$ has radius at most $\frac{n}{4}+1$. 
Theorem \ref{theo:rad in Outerpanar} implies that $q_n \leq \frac{n+2}{4}$. On the other
hand $q_n < \frac{n}{2}+3$ since, as is easy to see, every outerplanar graph with an 
interior face of length $q$ has radius at least $\frac{q-1}{2}$.
It would be interesting to get better estimates for $q_n$.   


\end{document}